 \documentclass[a4paper,11pt]{article}
 \pdfoutput=1
 \pagestyle{plain}
 \setlength{\oddsidemargin}{12pt}
 \setlength{\evensidemargin}{12pt}
 \setlength{\topmargin}{0pt}
 \setlength{\textwidth}{15cm}
 \setlength{\textheight}{21.5cm}
 \setlength{\parindent}{0.5cm}
 \setlength{\parskip}{1ex plus 0.2ex minus0.2ex}

\synctex=1
\usepackage{pdfsync}

 \usepackage[plainpages=false]{hyperref}
 \usepackage{amsfonts,amsmath,amssymb,amsthm}
 \usepackage{latexsym,lscape,rawfonts,mathrsfs}
 \usepackage{mathtools}
 \usepackage[utf8]{inputenc}
 \usepackage[overload]{empheq}
 \usepackage{cases}

 \usepackage[dvips]{color}
 \usepackage{multicol}


 \usepackage[all]{xy}
 \usepackage{makeidx}
 \usepackage{graphicx,psfrag}
 \usepackage{pstool}
 \usepackage{float}

 \usepackage{array,tabularx}

 \usepackage{setspace}

 \usepackage[titletoc]{appendix}



\usepackage{txfonts}

\newtheorem{prop}{Proposition}[section]

\newtheorem{proposition}[prop]{Proposition}

\newtheorem{theorem}[prop]{Theorem}

\newtheorem{corollary}[prop]{Corollary}

\newtheorem{remark}[prop]{Remark}

\numberwithin{equation}{section}

\title{Boundedness and gradient estimates for solutions to $\Delta u + a(x)u\log u + b(x)u = 0$ on Riemannian manifolds}
\author{Jie Wang and Youde Wang}
\author{Jie Wang \; and\; Youde Wang\footnote{corresponding author}}
\date{}

\begin{document}	
	\maketitle
	\begin{abstract}
		In this paper, combining Nash-Moser iteration and Sallof-Coste type Sobolev ineualities, we establish fundamental and concise $C^0$ and $C^1$ estimates for solutions to a class of nonlinear elliptic equations of the form
		$$\Delta u(x)+a(x)u(x)\ln u(x)+b(x)u(x)=0,$$
		which possesses abundant geometric backgrounds. Utilizing these estimates which retrieve more geometric information, we obtain some further properties of such solutions. Especially, we prove a local Liouville type theorem of corresponding constant coefficient equation.
	\end{abstract}
	\thanks{Key words: gradient estimate; Nash-Moser iteration; Liouville type theorem}\\
	\thanks{MSC 2020: 58J05; 35B45}
	
	\tableofcontents
	
	
	\section{Introduction}
	This is a further study based on our previous works \cite{Wj,WW}. In this paper, we continue to study the following equation on a smooth complete Riemannian n-manifold $(M^n,g)(n\geq 3)$:
	\begin{equation}\label{1.1}
		\Delta u(x)+a(x)u(x)\ln u(x)+b(x)u(x)=0,
	\end{equation}
	where $\Delta$ is the Laplace-Beltrami operator, $a(x)$ and $b(x)$ are coefficient functions on $(M^n,g)$. The existence of such solution $u$ on compact domains is a classical result proved by Rothaus \cite{Ro}.
	
	The type of equation (\ref{1.1}) is closely related to that of Euler-Lagrange equations of the $\mathcal{W}$ entropy and Log-Sobolev functional on $(M^n, g)$. $\mathcal{W}$ entropy \cite{P} is defined for $f\in W^{1,2}(M^n, e^{-f} dV)$ as
	$$\mathcal{W}(g, f, \tau) = \int_M[\tau(| \nabla f|^2 + R) + f - n](4\pi\tau)^{-\frac{n}{2}}e^{-f} dV,$$
	where $\tau > 0$ is a scale parameter and $R \in L^1(M^n, e^{-f} dV)$ is supposed. Setting $u^2 = (4\pi\tau)^{-\frac{n}{2}}e^{-f}$,
	we may rewrite the above $\mathcal{W}$-entropy for $u \in W^{1,2}(M^n,g)$ as
	$$\mathcal{W}(g, u, \tau)=\int_M\left(\tau(4|\nabla u|^2 + Ru^2) - u^2\log u^2 - nu^2 -\frac{n}{2}\log(4\pi\tau)u^2\right)dV.$$
	By the scale invariance property of $\mathcal{W}$ entropy, without loss of generality, we may assume $\tau=1$. Then, it follows
	$$\mathcal{W}(g, u, 1) = \mathcal{L}(u, g) - \frac{n}{2}(\log(4\pi)+2),$$
	if $\|u\|_{L^2(M^n,g)}=1$. Here $\mathcal{L}(u, g)=:\mathcal{L}(u, M^n, g)$ is the Log-Sobolev functional on $(M^n, g)$ perturbed by $R$, the scalar curvature of the manifold $(M^n, g)$, which is defined for $u\in W^{1,2}(M^n,g)$ as
	$$\mathcal{L}(u, M^n, g)=\int_M (4|\nabla u|^2 + Ru^2 - u^2\log u^2)dV.$$
	We define the best Log-Sobolev constant of a domain $\Omega\subset M^n$ as
	$$\lambda(\Omega) = \inf\{\mathcal{L}(u, g): u\in C_c^\infty(\Omega),\,\,\|u\|_{L^2(\Omega)}=1\}.$$
	When $\Omega = M^n$, we denote by $\lambda(M)$ the best Log-Sobolev constant of $(M^n, g)$. The Euler-Lagrange equation for the Log-Sobolev functional $\mathcal{L}$ is given by
	\begin{equation}\label{1.2*}
		\Delta u + \frac{1}{2}u \log u + \frac{1}{4}(\lambda- R)u =0.
	\end{equation}
	Under certain geometric conditions,  \cite{RV,Z*} proved some existence results of the extremal functions of Log-Sobolev functional defined on a complete non-compact Riemannian manifold. Especially, corresponding $C^0$ and $C^1$ estimates of $u$ are indispensable in their studies.
	
	Besides (\ref{1.2*}), after normalization, the potential function $f$ of a Ricci soliton also satisfies
	\begin{equation}\label{1.3*}
		\Delta\left( e^{-f}\right) -\varepsilon e^{-f}\log\left(e^{-f}\right) = 0,
	\end{equation}
	where $\varepsilon\in\left\lbrace-1,0,1\right\rbrace$ and the soliton is called shrinking, steady, expanding respectively (cf. \cite{CL}). As we know, Ricci solitons are crucial for the study of singularity analysis of Ricci flows. In short, (\ref{1.2*}) and (\ref{1.3*}) imply that equation (\ref{1.1}) possesses abundant geometric backgrounds.
	
	For constant $a(x)\equiv a$ and $b(x)\equiv0$, one has discussed the bounds of the solutions $u$ to equation (\ref{1.1}) via proving the Li-Yau type gradient estimates of corresponding parabolic equation and has obtained many interested results, for details we refer to \cite{C-C*, CL, Q1, Yang} and references therein. For equation (\ref{1.1}) with variable coefficients, J. Wang \cite{Wj} showed the bounds via elliptic gradient estimates and all of these estimates were based on the maximum principle.
	
	As for gradient estimates for the solution $u$ to equation (\ref{1.1}), most of the previous results relied on the maximum principle, such as \cite{CL, Q1, Q2, Yang}. Moreover, in most of the relevant work one chose $\ln u$ as a test function, but by taking a further observation, we find that $\ln u$ is not a good function used to derive the gradient estimates (cf. Proposition \ref{p3.3}). In the previous work \cite{WW} due to the authors of this paper, for a bounded positive solution $u$, under integral Ricci curvature conditions we employed the Nash-Moser iteration method to show some new local gradient estimates by analyzing the function $u^{\frac{1}{q}}(q>1)$ instead of $\ln u$. Especially, this type of gradient estimate was applied to study the properties of Ricci flow under integral Ricci curvature conditions by Ma-Wang \cite{MW} successfully.
	
	Recently, Y. Wang (one of the authors of this paper) and his coauthors considered $$\Delta_p u+ a u^q=0$$ in \cite{HWW, WWei} and combined Nash-Moser iteration method and Saloff-Coste's inequalities to derive some unified Cheng-Yau type inequalities (also see \cite{WZ}). Shortly after, inspired by \cite{WW}, Han, He and Wang in \cite{HHW} adopted a similar method with that in \cite{HWW, WWei} to appraoch the gradient estimates of solutions to some quasilinear elliptic equations, for instance
	$$\Delta_p u-|\nabla u|^q+b(x)|u|^{r-1}u=0$$
	which is defined on a complete Riemannian manifold $(M,g)$. In particular, in the case $b(x)\equiv0$, a unified Cheng-Yau type estimate of the solutions to this equation is derived.
	
	Naturally, one wants to know whether or not combining Saloff-Coste type Sobolev inequalities and Nash-Moser iteration leads to some more refined gradient estimates for solutions to \eqref{1.1}?
	
	The goal of the present paper is to answer the above question, i.e., we can obtain some general estimates of bounds on solutions to equation \eqref{1.1} by the method mentioned in the above. To compare with the maximum principle, we find that Nash-Moser iteration owns some unique advantages, for instance, one does not need to look for a test function painstakingly. Besides, this method still works for more general curvature conditions such as integral Ricci curvature conditions, for details we refer to \cite{WW} for related discussions.
	
	In order to state our results, we need to introduce some notations. In the following, we will use $B(x,r)$ or $B_r$ and $\left| B(x, r)\right|$ or $\left| B_r\right| $ to denote the geodesic ball with radius $r$ in $M$ centered at $x$ and its volume, respectively. $(f)^+$ denotes the positive part of $f$ and $(f)^-$ denotes the negative part of $f$. For convenience, for $p\geq1$, we define the average $L^p$ norm as
	$$\left| \left| f\right| \right|^*_{p,B(x,r)}= \left( \fint_{B(x,r)}\left| f\right|^p \right)^{\frac{1}{p}}.$$
	Furthermore, when $p\geq1$, it's well-known that the norm $\left| \left| f\right| \right|^*_{p,B(x,r)}$ is non-decreasing in $p$ for fixed $f$ and $B(x,r)$. We also assume that $\partial B_r$ does not intersect with the boundary $\partial M^n$.
	
	Now, we are ready to state our main results:
	
	\begin{theorem}\label{t1.1}
		Let $B_{2r}\subset M^n$ be some geodesic ball with $0<r\leq1$ and its Ricci curvature $Ric\geq-(n-1)Kg$ for some $K\geq0$. Let $u>0$ be a positive (weak) solution to equation (\ref{1.1}) on $B_{2r}$ and $p>\frac{n}{2}$.
		
		(1). Suppose $a(x)>0$, then there exists a constant
		$$C=C\left( n,K,p,\left| \left| a\right| \right|^*_{2p, B_r}, \left| \left| b^-\right| \right|^*_{p, B_r}, \left| \left| \frac{(\Delta a)^- }{a}\right| \right|^*_{p, B_r}, \left| \left|\nabla a\right| \right|^*_{2p, B_r},\left| \left|\nabla b\right| \right|^*_{2p, B_r}\right)>0$$
		such that on $B_{\frac{r}{2}}$,
		\begin{equation}\label{1.2}
			a\ln u\leq \frac{C}{r^2}.
		\end{equation}
		Especially, if $a\geq A_1>0$, then on $B_{\frac{r}{2}}$,
		\begin{equation}\label{1.3}
			u\leq e^{\frac{C}{A_1r^2}}.
		\end{equation}
		
		(2). (i) Suppose $a(x)<0$, then there exists a constant
		$$C=C\left( n,K,p,\left| \left| a\right| \right|^*_{2p, B_r}, \left| \left| b^-\right| \right|^*_{p, B_r}, \left| \left| \frac{(\Delta a)^+}{a}\right| \right|^*_{p, B_r}, \left| \left|\nabla a\right| \right|^*_{2p, B_r},\left| \left|\nabla b\right| \right|^*_{2p, B_r}\right)>0$$
		such that on $B_{\frac{r}{2}}$,
		\begin{equation}\label{1.4}
			a\ln u\leq \frac{C}{r^2}.
		\end{equation}
		Especially, if $a\leq A_2<0$, then on $B_{\frac{r}{2}}$,
		\begin{equation}\label{1.5}
			u\geq e^{\frac{C}{A_2r^2}}.
		\end{equation}
		
		(ii) Suppose $a<0$ is constant, then there exists a constant
		$$C=C\left( n,K,a,p,\left| \left|(\ln u)^+\right| \right|^*_{p, B_r}, \left| \left| b\right| \right|^*_{p, B_r}, \left| \left|\nabla b\right| \right|^*_{2p, B_r}\right)>0$$
		such that on $B_{\frac{r}{2}}$,
		\begin{equation*}
			\ln u\leq \frac{C}{r^2}.
		\end{equation*}
	\end{theorem}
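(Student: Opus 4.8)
The plan is to pass to $w=\ln u$, which for a positive weak solution $u$ of (\ref{1.1}) satisfies the semilinear equation
\begin{equation*}
\Delta w+|\nabla w|^{2}+a(x)w+b(x)=0\qquad\text{on }B_{2r},
\end{equation*}
while also noting that (\ref{1.1}) is itself the linear Schr\"odinger-type equation $\Delta u+c(x)u=0$ with potential $c=a\ln u+b$. Since $u>0$, the ground-state substitution $\phi\mapsto u^{-1}\phi$ yields, for every $\phi\in C_c^{\infty}(B_{2r})$ (after a routine approximation for weak solutions),
\begin{equation*}
\int_{B_{2r}}(a\ln u+b)\,\phi^{2}\;\le\;\int_{B_{2r}}|\nabla\phi|^{2},
\end{equation*}
which heuristically puts $a\ln u+b$ below the first Dirichlet eigenvalue of $B_{2r}$, hence below $C(n,K)r^{-2}$ (for $r\le1$, using $\mathrm{Ric}\ge-(n-1)Kg$). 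This already bounds $a\ln u$ from above on average, which is exactly what parts (1) and (2)(i) ask for; in part (2)(ii) one instead wants $\ln u$ itself bounded above, and because $a<0$ the same inequality only produces a \emph{lower} bound for $\ln u$, which is why in that case the local $L^{p}$ bound on $(\ln u)^{+}$ has to be assumed (and hence appears in $C$).

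To turn the heuristic into a genuine local bound $\|(a\ln u)^{+}\|^{*}_{p_{0},B_r}\le Cr^{-2}$ with $C$ depending only on $n,K,p$ and the coefficient norms in the statement, I would test the equation for $v:=a\ln u$ (write $v_{+}=(v)^{+}$) --- equivalently, test (\ref{1.1}) against functions weighted by $1/a$ --- with cut-offs of the form $\eta^{2}v_{+}^{2\beta-1}$, exploiting the favourable sign of the $|\nabla w|^{2}$ term and absorbing the cross terms by Cauchy--Schwarz. (As the paper stresses, $\ln u$ itself is a poor test function, so one really works with $v_{+}$ and its powers.) This is precisely where $\|\nabla a\|^{*}_{2p,B_r}$, $\|\nabla b\|^{*}_{2p,B_r}$ and $\|(\Delta a)^{\mp}/a\|^{*}_{p,B_r}$ enter: passing from $w$ to $v=aw$, or weighting by $1/a$, manufactures the terms $\nabla(1/a)\sim-\nabla a/a^{2}$ and $\Delta(1/a)\sim-\Delta a/a^{2}$, whose unfavourable parts are controlled by $(\Delta a)^{-}/a$ when $a>0$ and by $(\Delta a)^{+}/a$ when $a<0$. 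In case (2)(ii), $a$ is a negative constant, all such terms vanish, and the iteration simply starts from the assumed $L^{p}$ bound on $(\ln u)^{+}$.

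\textbf{Step 2 (Nash--Moser iteration and consequences).} With the integral bound secured and $(a\ln u+b)^{+}\in L^{p}(B_r)$ for $p>\frac n2$, I would run the Moser iteration on the equation for $v$ (on $(\ln u)^{+}$ in case (2)(ii)): test with $\eta^{2}v_{+}^{2\beta-1}$, keeping the good-signed, critical-growth term $|\nabla w|^{2}$; apply the Saloff-Coste local Sobolev inequality on $B_r$, legitimate under $\mathrm{Ric}\ge-(n-1)Kg$ with constant $C(n,K)$ (as $r\le1$) and the volume normalisation built into $\|\cdot\|^{*}$; use H\"older on the potential and forcing terms; thereby obtain a reverse-H\"older inequality gaining a factor $\mu=\frac{n}{n-2}>1$ in the integrability exponent at each step. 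Iterating $\beta\mapsto\mu\beta$ over radii shrinking from $r$ to $r/2$ and summing the convergent telescoping product gives $\sup_{B_{r/2}}v_{+}\le Cr^{-2}$, i.e. (\ref{1.2}) and (\ref{1.4}), and likewise $\ln u\le Cr^{-2}$ on $B_{r/2}$ in case (2)(ii). Dividing the resulting inequality by $a$ --- which reverses it when $a\le A_2<0$ --- and exponentiating then produces (\ref{1.3}) and (\ref{1.5}).

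\textbf{Main obstacle.} The delicate point is Step 1 for parts (1) and (2)(i): obtaining the a priori local integral bound with constants depending only on the stated averaged norms of $a,b$ and their first and second derivatives --- and not on $u$ --- while taming the logarithmic nonlinearity and the lower-order errors generated by the $1/a$-weighted test functions. Handling the critical term $|\nabla\ln u|^{2}$ inside the iteration and keeping precise track of the $r^{-2}$ scaling are the other technical hurdles, but they become routine once Step 1 is in hand.
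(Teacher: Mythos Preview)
Your ground-state inequality $\int(a\ln u+b)\phi^{2}\le\int|\nabla\phi|^{2}$ is correct and morally gives the right $L^{1}$ scale, but the Moser iteration you propose on $v=a\ln u$ does not close, and the reason is precisely the sign you call ``favourable''. From $\Delta w+|\nabla w|^{2}+aw+b=0$ one has $\Delta w\le -aw-b$, so $w$ (and hence $v$ when $a>0$) is a \emph{super}solution of the linearised problem, not a subsolution; equivalently $\Delta v=-a|\nabla w|^{2}+(\text{lower order})$, and $-a|\nabla w|^{2}\le 0$ pushes $\Delta v$ down, which is the wrong direction for the Caccioppoli step in an upper-bound iteration. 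If you try to re-express $a|\nabla w|^{2}\sim a^{-1}|\nabla v|^{2}$ and absorb it, then after testing with $\eta^{2}v_{+}^{2\beta-1}$ you face $\int\eta^{2}v_{+}^{2\beta-1}a^{-1}|\nabla v|^{2}$, which for large $\beta$ dominates the good term $(2\beta-1)\int\eta^{2}v_{+}^{2\beta-2}|\nabla v|^{2}$ by a full power of $v_{+}$ and cannot be absorbed. So iterating on $a\ln u$ alone cannot work; the quadratic gradient term is the genuine obstacle, not a technicality.

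The paper's device is to fold $|\nabla w|^{2}$ into the iterated quantity: set $G=|\nabla w|^{2}+aw+1$, so that $\Delta w+G+b-1=0$. Bochner's formula then gives
\[
\Delta G\ \ge\ \tfrac{2}{n}(\Delta w)^{2}-2\langle\nabla w,\nabla G\rangle+(\text{lower order})\ =\ \tfrac{2}{n}(G+b-1)^{2}-2\langle\nabla w,\nabla G\rangle+(\text{lower order}),
\]
producing a superlinear lower bound $\tfrac{2}{n}G^{2}$ for $\Delta G$ that drives the iteration; the drift $\langle\nabla w,\nabla G\rangle$ is handled by one integration by parts together with $|\nabla w|^{2}\le G$, which forces $l+1\ge 2n$ so that $\tfrac{2}{n}-\tfrac{4}{l+1}\ge 0$. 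The coefficients $(\Delta a)^{-}/a$, $|\nabla a|$, $|\nabla b|$ enter not via any $1/a$-weighting but simply from expanding $\Delta(aw)=a\Delta w+w\Delta a+2\langle\nabla a,\nabla w\rangle$ inside $\Delta G$ and using $0<aw<G$. Finally, the $L^{1}$ seed replaces your Step~1 entirely and is a two-line computation: $\int\eta^{2}G=-\int\eta^{2}(\Delta w+b-1)$, integrate by parts, and absorb $\tfrac12\int\eta^{2}|\nabla w|^{2}\le\tfrac12\int\eta^{2}G$ back into the left side to get $\|G\|^{*}_{1,B_{4r/5}}\le C(n,K,\|b^{-}\|^{*}_{p})\,r^{-2}$.
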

	
	By the $C^0$ estimates obtained in the above Theorem \ref{t1.1} and the techniques used in our previous paper \cite{WW}, we are able to conclude the following general $C^1$ estimates which are helpful for further studies of the properties of solutions $u$.
	
	\begin{theorem}\label{t1.2}
		Let $B_{4r}\subset M^n$ be some geodesic ball with $0<r\leq1$ and its Ricci curvature $Ric\geq-(n-1)Kg$ for some $K\geq0$. Let $u>0$ be a positive solution to equation (\ref{1.1}) on $B_{2r}$, $p>\frac{n}{2}$ and $q>1$.
		
		(1). Suppose $a\geq A_1>0$, there exists a constant
		$$C=C\left( n,K,p,q,A_1, \left| \left| a\right| \right|^*_{2p, B_r}, \left| \left| b\right| \right|^*_{p, B_r}, \left| \left|(\Delta a)^- \right| \right|^*_{p, B_r}, \left| \left|\nabla a\right| \right|^*_{2p, B_r},\left| \left|\nabla b\right| \right|^*_{2p, B_r}\right)>0$$
		such that on $B_{\frac{r}{2}}$,
		\begin{equation}\label{1.6}
			\frac{\left|\nabla u\right|}{u^{1-\frac{1}{2q}}} \leq \frac{C}{r}.
		\end{equation}
		
		(2). Suppose $a\leq A_2<0$ and $u\leq D$, there exists a constant
		$$C=C\left( n,K,p,q,D,A_2, \left| \left| a\right| \right|^*_{2p, B_r}, \left| \left| b\right| \right|^*_{p, B_r}, \left| \left|(\Delta a)^+\right| \right|^*_{p, B_r}, \left| \left|\nabla a\right| \right|^*_{2p, B_r},\left| \left|\nabla b\right| \right|^*_{2p, B_r}\right)>0$$
		such that on $B_{\frac{r}{2}}$,
		\begin{equation}\label{1.7}
			\frac{\left|\nabla u\right|}{u^{1-\frac{1}{2q}}} \leq \frac{C}{r}.
		\end{equation}
	\end{theorem}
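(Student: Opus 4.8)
The plan is to deduce Theorem \ref{t1.2} from the $C^{0}$ bounds of Theorem \ref{t1.1} by running the integral Bochner / Nash--Moser scheme of \cite{WW}, now applied to a suitable power of $u$ rather than to $\ln u$.

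\emph{Reduction.} First I would set $v=u^{\frac{1}{2q}}$, so that $|\nabla v|=\frac{1}{2q}\,|\nabla u|\,u^{\frac{1}{2q}-1}$ and \eqref{1.6}, \eqref{1.7} become equivalent to $\sup_{B_{r/2}}|\nabla v|\le C/r$. Plugging $u=v^{2q}$ into \eqref{1.1} shows that $v$ solves
\[
\Delta v+(2q-1)\frac{|\nabla v|^{2}}{v}+a\,v\log v+\frac{b}{2q}\,v=0,
\]
where $2q-1\ge 1>0$ since $q>1$; this extra first-order term is exactly what will generate the coercive term in the Bochner inequality. By Theorem \ref{t1.1}, in case (1) one has $u\le e^{C/(A_{1}r^{2})}$, so $v$ is bounded above on $B_{r}$ (and $v\log v\ge-1/e$ automatically, so no lower bound on $u$ is needed); in case (2) the hypothesis $u\le D$ together with Theorem \ref{t1.1}(2)(i) gives $0<e^{C/(A_{2}r^{2})}\le u\le D$, so $v$ is bounded above and below. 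Hence $v$, $v\log v$, $\log v$ (in case (2)) and $(2q-1)/v$ are under control, with the quantities $\|a\|^{*}_{2p,B_{r}},\|b\|^{*}_{p,B_{r}},\|\nabla a\|^{*}_{2p,B_{r}},\|\nabla b\|^{*}_{2p,B_{r}}$ (and, through Theorem \ref{t1.1}, $\|(\Delta a)^{\mp}\|^{*}_{p,B_{r}}$) entering the constants as listed.

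\emph{Bochner inequality for $w=|\nabla v|^{2}$.} Next I would apply the Bochner formula, use $\mathrm{Ric}\ge-(n-1)Kg$ and $|\nabla^{2}v|^{2}\ge(\Delta v)^{2}/n$, and eliminate $\langle\nabla v,\nabla\Delta v\rangle$ by differentiating the equation for $v$. Substituting $\Delta v=-(2q-1)w/v-av\log v-\tfrac{b}{2q}v$ produces a term $\gtrsim(2q-1)^{2}w^{2}/v^{2}$ from $(\Delta v)^{2}/n$, while expanding $-\langle\nabla v,\nabla\big((2q-1)w/v\big)\rangle$ yields a further good term $+(2q-1)w^{2}/v^{2}$ and a drift term $-(2q-1)v^{-1}\langle\nabla v,\nabla w\rangle$; the pieces coming from $\nabla(av\log v)$ and $\nabla(\tfrac{b}{2q}v)$ are bounded by $C(|a|+|b|)w+C(|\nabla a|+|\nabla b|)\sqrt{w}$ after using the $C^{0}$ bounds. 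Absorbing the $|\nabla w|$-terms partly into the $|\nabla^{2}v|^{2}$ on the left of Bochner and partly by Young's inequality, and using that $v$ is bounded above, I expect to arrive at
\[
\Delta w\ \ge\ \varepsilon_{0}\,w^{2}\ -\ V\,w\ -\ E
\]
(possibly with a harmless drift contributing only further good terms after integration), where $\varepsilon_{0}=\varepsilon_{0}(n,q,\sup v)>0$ and $V,E\in L^{p}$, $p>\tfrac n2$, are built from $K$ and the listed $L^{p}$ data, with $E\sim|\nabla a|^{2}+|\nabla b|^{2}+a^{2}+b^{2}$ accounting for the $\nabla a,\nabla b$ entering via their $L^{2p}$ norms.

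\emph{Nash--Moser iteration and conclusion.} Finally I would test the inequality against $\phi^{2}w^{\beta}$ for cutoffs $\phi$ in nested balls and $\beta\ge1$, integrate by parts, absorb gradient terms by Young, and invoke the Saloff--Coste Sobolev inequality, valid on $B_{r}$ with $r\le1$ under $\mathrm{Ric}\ge-(n-1)Kg$, to gain the factor $\chi=\frac{n}{n-2}$. The coercive term $\varepsilon_{0}w^{2}$ both supplies the base estimate $\fint_{B_{r}}w^{2}\le C/r^{4}$ (test with $\phi^{2}$, then Hölder plus $\varepsilon$-absorption on $\int\phi^{2}Vw$) and keeps every step uniformly controlled; iterating over $\beta_{k}=\chi^{k}\beta_{0}$ gives $\sup_{B_{r/2}}w\le C/r^{2}$, and undoing $v=u^{1/(2q)}$ yields \eqref{1.6}, \eqref{1.7} with the stated dependence of $C$ (the extra dependence on $q$, $A_{1}$ or $A_{2}$, and $D$ entering through $\sup v$, $\inf v$ and $\varepsilon_{0}$). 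The \emph{main obstacle} I anticipate is the bookkeeping in the Bochner step: arranging the Hessian/drift cross-terms and the $\nabla a,\nabla b$ contributions so that $V,E$ genuinely lie in $L^{p}$ with precisely the norms listed, and — in case (1) — carrying this out with only an upper bound on $u$, so that $\log v$ need not be bounded below and one must instead exploit the favorable sign of $-a\,v\log v$ rather than estimate it crudely.
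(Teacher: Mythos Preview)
Your proposal is correct and follows essentially the paper's approach: the paper sets $w=u^{1/q}$ and iterates $v=(q-1)|\nabla w|^2/w$ (which coincides, up to a constant, with your $|\nabla v|^2$ for $v=u^{1/(2q)}$), invokes the $C^0$ bound from Theorem \ref{t1.1}, derives a Bochner-type inequality for $v$, and runs Nash--Moser with the Saloff--Coste Sobolev inequality. The only minor differences are that the paper handles the drift term not by pointwise absorption but by integrating $\eta^2 v^l\langle\nabla\log w,\nabla v\rangle$ by parts (using the equation for $\Delta\log w$) and then choosing the Moser exponent $l$ large enough to make the net coefficient of $\eta^2 v^{l+2}/w$ nonnegative, and it obtains the base estimate as an $L^1$ bound directly from the identity $v=-\Delta w-aw\log w-\tfrac{b}{q}w$ together with $-w\log w\le e^{-1}$, rather than an $L^2$ bound from the quadratic term.
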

	
	If the coefficients $a(x)$ and $b(x)$ of equation (\ref{1.1}) are constants, one would expect better behaviors of $u$, such as Liouville type theorems. Moreover, in this situation, $u\equiv e^{-\frac{b}{a}}$ is the trivial positive solution.
	\begin{theorem}\label{t1.3}
		For constant coefficients $a>0$ and $b$, there exists a constant $\varepsilon=\varepsilon\left(n,a,b\right)>0$ such that if
		\begin{equation}\label{Seo}
			0<e^{-\frac{b}{a}}-\varepsilon\leq u\leq e^{-\frac{b}{a}}\quad\mbox{or}\quad e^{-\frac{b}{a}}\leq u\leq e^{-\frac{b}{a}}+\varepsilon
		\end{equation}
		is a solution to (\ref{1.1}) on $B\left(x,\varepsilon^{-1}\right)$ with $Ric\geq 0$, then $u\equiv e^{-\frac{b}{a}}$. Especially, in these situations, except for $u\equiv e^{-\frac{b}{a}}$, there is no nonconstant solution $u$, which satisfies the above pinching condition \eqref{Seo}, such that $u\rightarrow e^{-\frac{b}{a}}$ at infinity on a complete non-compact Riemannian manifold with $Ric\geq 0$.
	\end{theorem}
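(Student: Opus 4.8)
\textbf{Proof strategy for Theorem~\ref{t1.3}.} The plan is to reduce the equation to $\Delta\phi+a\phi\ln\phi=0$ for a function $\phi$ pinched near $1$, to observe that $\psi:=|\phi-1|$ is a nonnegative superharmonic \emph{positive supersolution} of a shifted Laplacian $-\Delta-\mu$ with a fixed $\mu>0$ unless it vanishes identically, and then to contradict this with Cheng's eigenvalue comparison once the radius $\varepsilon^{-1}$ is large. First I would normalize away $b$: putting $\phi:=e^{b/a}u$, a one-line computation gives $\Delta\phi+a\phi\ln\phi=0$ on $B(x,\varepsilon^{-1})$ (the term $bu$ is exactly cancelled by $a\phi\ln(e^{-b/a})$), while \eqref{Seo} becomes $1\le\phi\le1+\varepsilon'$ or $1-\varepsilon'\le\phi\le1$ with $\varepsilon'=e^{b/a}\varepsilon$. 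By elliptic regularity $\phi$ is smooth, so $\psi:=|\phi-1|\ge0$ is smooth.

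Next I would check the sign structure. If $\phi\ge1$ then $\phi\ln\phi\ge0$, so $\Delta\psi=\Delta\phi=-a\phi\ln\phi\le0$; if $\phi\le1$ then $\phi\ln\phi\le0$, so $\Delta\psi=-\Delta\phi=a\phi\ln\phi\le0$. Thus in both cases $\psi$ is superharmonic on the connected open ball $B(x,\varepsilon^{-1})$, and by the strong minimum principle either $\psi\equiv0$ — which is exactly the conclusion $u\equiv e^{-b/a}$ — or $\psi>0$ throughout $B(x,\varepsilon^{-1})$; suppose the latter. I would then produce a differential inequality $-\Delta\psi\ge\mu\psi$ with $\mu$ a fixed positive constant. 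In the case $\phi\ge1$ this is $a\,\phi\ln\phi\ge a(\phi-1)=a\psi$, which holds with $\mu=a$ by the elementary inequality $t\ln t\ge t-1$ for $t\ge1$, with no smallness needed. In the case $\phi\le1$ it reads $-a\,\phi\ln\phi\ge\mu(1-\phi)$; since $(-t\ln t)/(1-t)\to1$ as $t\to1^-$, one has $-t\ln t\ge\tfrac12(1-t)$ on $[1-\varepsilon',1]$ once $\varepsilon'$ (hence $\varepsilon$) is small in terms of $a,b$, so $\mu=a/2$ works. (The pinching is genuinely used here, since $(-t\ln t)/(1-t)\to0$ as $t\to0^+$.)

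The crux is the standard mechanism that a smooth positive $\psi$ with $-\Delta\psi\ge\mu\psi$ on an open set $\Omega$ forces $\lambda_1(\Omega)\ge\mu$, $\lambda_1$ being the first Dirichlet eigenvalue. For $f\in C_c^\infty(\Omega)$ I would write $f=\psi h$ with $h=f/\psi\in C_c^\infty(\Omega)$ and integrate by parts to obtain the ground-state transform identity $\int_\Omega|\nabla f|^2=\int_\Omega\psi^2|\nabla h|^2-\int_\Omega h^2\psi\Delta\psi\ge\mu\int_\Omega\psi^2h^2=\mu\int_\Omega f^2$, so $\lambda_1(\Omega)\ge\mu$, with no boundary regularity of $\Omega$ needed. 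Applying this on $\Omega=B(x,\rho')$ for each $\rho'<\varepsilon^{-1}$ and combining with Cheng's comparison $\lambda_1(B(x,\rho'))\le C(n)/\rho'^2$ (which uses $Ric\ge0$), then letting $\rho'\uparrow\varepsilon^{-1}$, gives $\mu\le C(n)\varepsilon^2$. Choosing $\varepsilon=\varepsilon(n,a,b)$ so small that also $C(n)\varepsilon^2<\tfrac a2\le\mu$ produces a contradiction; hence $\psi\equiv0$ and $u\equiv e^{-b/a}$ on $B(x,\varepsilon^{-1})$.

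For the last assertion, if $M$ is complete noncompact with $Ric\ge0$ and $u$ satisfies \eqref{Seo} (with this $\varepsilon$) together with $u\to e^{-b/a}$ at infinity, then the one-sided pinching holds on every geodesic ball of radius $\varepsilon^{-1}$ lying far enough out, so the local statement forces $u\equiv e^{-b/a}$ outside a compact set, and Aronszajn's unique continuation theorem — applicable since $u-e^{-b/a}$ solves a linear elliptic equation with bounded coefficients, $u$ being bounded away from $0$ — propagates this to all of $M$. The main obstacle I anticipate is essentially bookkeeping: fixing the explicit threshold for $\varepsilon$ in the case $u\le e^{-b/a}$ and verifying the two elementary logarithmic inequalities; the superharmonicity together with the positive-supersolution/eigenvalue part of the argument is robust.
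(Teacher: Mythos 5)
Your proposal is correct and gives a genuinely different, more elementary proof than the paper's. The paper sets $w=u-e^{-b/a}$, reduces to the Schr\"odinger equation $\Delta w+Vw=0$ with $V=u(a\ln u+b)/(u-e^{-b/a})$, then works with $v=\ln w$ and $G=|\nabla v|^2+V$, derives a Bochner-type differential inequality $\Delta G\geq G^2/n-2\langle\nabla v,\nabla G\rangle$ after estimating $\Delta V\geq -C(\varepsilon,a,b)G$, and runs the Nash--Moser iteration (with Saloff-Coste's Sobolev inequality and volume comparison) to bound $\sup_{B_{r/2}}G$ by $\|G\|^*_{1,B_{4r/5}}\lesssim r^{-2}$, contradicting $G\geq V\geq\frac{2a}{3}$ once $r$ is large. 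You instead normalize $\phi=e^{b/a}u$, notice that $\psi=|\phi-1|$ is superharmonic under the one-sided pinching, upgrade this to $-\Delta\psi\geq\mu\psi$ with $\mu\geq a/2$ via two elementary log inequalities, pass through the ground-state transform (equivalently the Allegretto--Piepenbrink principle) to get $\lambda_1(B(x,\rho'))\geq\mu$, and then contradict Cheng's eigenvalue comparison $\lambda_1(B_{\rho'})\leq C(n)/\rho'^2$ for $\rho'$ near $\varepsilon^{-1}$. Both arguments are sound; yours is shorter, avoids the Bochner formula and Moser iteration entirely, and makes the dependence of $\varepsilon$ on $(n,a,b)$ transparent through the explicit inequality $C(n)\varepsilon^2<a/2$, whereas the paper's route is chosen to stay within the unified Nash--Moser framework used throughout their paper and would generalize more readily to, say, integral Ricci conditions. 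One small remark: for the final global assertion, if the pinching \eqref{Seo} is read as holding on all of $M$ then a single ball of radius $\varepsilon^{-1}$ already settles it and your unique continuation step is only needed if the pinching is assumed merely near infinity, which is the more natural reading of ``$u\to e^{-b/a}$ at infinity''; your invocation of Aronszajn for $\Delta w+Vw=0$ with bounded (in fact smooth, after removing the singularity of $V$ at $u=e^{-b/a}$) potential is correct.
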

	
	This paper is organized as follows. We give the $C^0$ estimates and gradient estimates of solutions to  (\ref{1.1}) respectively in Section \ref{sec2} and Section \ref{sec3}. In section \ref{sec4}, we are devoted to study the global properties of the solutions, and prove mainly the above local Liouville type Theorem \ref{t1.3}.
	
	\section{$C^0$ estimates}\label{sec2}
	First we recall the Saloff-Coste's Sobolev inequality which is crucial for the  Nash-Moser iteration.
	\begin{theorem}[\cite{SC}, Theorem 3.1]\label{t2.1}
		Let $(M^n,g)$ be a complete Riemannian manifold with $Ric\geq-(n-1)K g$ on some $B_{2r}$. Then for $n\geq3$, there exists a dimensional constant $C_n$ such that for all $\varphi\in C^\infty_0(B_r)$,
		\begin{equation*}
			\left( \int_{B_r}\varphi^{\frac{2n}{n-2}}\right)^{\frac{n-2}{n}}\leq \frac{C_Sr^2}{\left|B_r\right|^{\frac{2}{n}}}\int_{B_r}\left(\left|\nabla \varphi \right|^2+r^{-2}\varphi^2\right),
		\end{equation*}
		where the Sobolev constant $$C_S=e^{C_n\left( 1+r\sqrt{K}\right)}.$$
		Moreover, for $n=2$, the above inequality still holds with $n$ replaced by any $n'>2$. For convenience, we also write the Sobolev inequality as
		\begin{equation}\label{2.1}
			\left( \fint_{B_r}\varphi^{\frac{2n}{n-2}}\right)^{\frac{n-2}{n}}\leq C_Sr^2\fint_{B_r}\left(\left|\nabla \varphi \right|^2+r^{-2}\varphi^2\right).
		\end{equation}
	\end{theorem}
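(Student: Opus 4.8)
\emph{Proof sketch.} This inequality is a reformulation of the main result of \cite{SC}, and its mechanism is not tied to the curvature hypothesis except through two classical consequences of $Ric\geq-(n-1)Kg$ on $B_{2r}$: the \emph{volume doubling property} and a \emph{local $L^2$ Poincar\'e (Neumann) inequality}. These two ingredients self-improve to a scale-invariant Sobolev inequality on balls by the Saloff-Coste--Grigor'yan principle. I would organize the proof in three steps.

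\textbf{Step 1 (Doubling).} By the Bishop--Gromov volume comparison theorem, under $Ric\geq-(n-1)Kg$ one has, for every $x$ and every $0<s\leq t$ with $B(x,t)\subset B_{2r}$,
\begin{equation*}
\frac{|B(x,t)|}{|B(x,s)|}\leq C_n\,e^{C_n\sqrt{K}\,t}\left(\frac{t}{s}\right)^{n}.
\end{equation*}
Restricting to scales $\leq 2r$ makes this a genuine doubling property with doubling constant bounded by $e^{C_n(1+r\sqrt{K})}$.

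\textbf{Step 2 (Poincar\'e).} Buser's inequality (equivalently, the segment inequality), valid under the same curvature bound, yields a local $L^2$ Neumann--Poincar\'e inequality: for all smooth $f$ and all balls with $B(x,2s)\subset B_{2r}$,
\begin{equation*}
\fint_{B(x,s)}\bigl|f-f_{B(x,s)}\bigr|^2\leq C_n\,e^{C_n\sqrt{K}\,s}\,s^2\fint_{B(x,2s)}|\nabla f|^2,
\end{equation*}
where $f_B$ denotes the average of $f$ over $B$.

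\textbf{Step 3 (Self-improvement to Sobolev).} This is the crux. Covering $B_r$ by balls on which Step 2 applies and chaining the Poincar\'e estimates along dyadic scales using the doubling bound of Step 1, one derives the equivalent Faber--Krahn / Nash-type inequality
\begin{equation*}
\|f\|_{L^2(B_r)}^{2+\frac4n}\leq C_S\,\frac{r^2}{|B_r|^{2/n}}\Bigl(\|\nabla f\|_{L^2(B_r)}^2+r^{-2}\|f\|_{L^2(B_r)}^2\Bigr)\,\|f\|_{L^1(B_r)}^{4/n}
\end{equation*}
for all $f\in C_0^\infty(B_r)$, which in turn upgrades to \eqref{2.1} by the standard Nash--Moser truncation argument (testing against $(|f|-k)^+$ and summing over dyadic levels $k$). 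Tracking the constants through Steps 1--3 aggregates them into the stated form $C_S=e^{C_n(1+r\sqrt{K})}$. \emph{The main obstacle is precisely Step 3:} the covering/chaining must be arranged so that the exponential dependence on $r\sqrt{K}$ produced in Steps 1--2 is not amplified into a worse constant, which is the quantitative heart of the implication ``doubling $+$ Poincar\'e $\Rightarrow$ Sobolev.''

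\textbf{The case $n=2$.} Here $2n/(n-2)$ is not finite, so one instead runs Step 3 with any fixed $n'>2$ in place of $n$; the geometric inputs of Steps 1--2 are dimension-robust, giving \eqref{2.1} with $n$ replaced by $n'$. (An alternative route to the same conclusion is to invoke the Li--Yau gradient estimate to obtain Gaussian heat-kernel bounds on $B_{2r}$ and then read off the Faber--Krahn inequality from the on-diagonal upper bound; the bookkeeping of the constants is comparable.)
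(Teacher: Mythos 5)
The paper does not prove Theorem~\ref{t2.1}: it is quoted verbatim from \cite{SC} (Theorem 3.1 there), with only a passing remark that the same inequality can alternatively be obtained from heat kernel estimates as in \cite[Section 14]{Li}. Your three-step sketch (Bishop--Gromov doubling, Buser $L^2$ Poincar\'e, then the self-improvement ``doubling $+$ Poincar\'e $\Rightarrow$ Sobolev'' via a Nash/Faber--Krahn intermediate and Maz'ya-type truncation) is precisely the Saloff-Coste route that the citation invokes, and your closing alternative via Li--Yau heat-kernel bounds matches the paper's remark; so this is correct and in line with the cited proof, not a genuinely different argument. The only place I would push you for precision is Step 3, which you rightly flag as the crux but leave as a black box: the chaining that keeps the constant at the level $e^{C_n(1+r\sqrt{K})}$ (rather than compounding it) is where all the work is, and one must also be careful that the Poincar\'e input is a Neumann inequality on sub-balls while the Sobolev output is a Dirichlet inequality on $B_r$, which is exactly what the covering argument is designed to reconcile.
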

	Here we remark that this type of Sobolev inequalities can be also derived by heat kernel estimates (cf. \cite[section 14]{Li}).
	
	Now we are going to prove Theorem \ref{t1.1}.
	\begin{proof}[\textbf{Proof of Theorem \ref{t1.1}}]
		\textbf{(1) }Let $w=\ln u$ and $G=\left|\nabla w\right|^2+aw+1$. By (\ref{1.1}), direct computations imply
		\begin{equation}\label{2.2}
			\Delta w+\left|\nabla w\right|^2+aw+b=0,
		\end{equation}
		and hence
		\begin{equation}\label{2.3}
			\Delta w+G+b-1=0.
		\end{equation}
		Without loss of generality, we may assume $w>0\, (i.e. \hspace*{0.5em}a(x)w(x)>0,\hspace*{0.5em}u(x)>1 \hspace*{0.5em}\mbox{and} \hspace*{0.5em}G(x)>1)$, since otherwise we can replace $w$ by $w^+$ in $G$. In the following, we only show the case of $n\geq 3$, and the case of $n=2$ can be proved by the same arguments.
		To apply Nash-Moser iteration, we need to estimate $\Delta G$. By (\ref{2.2}) and (\ref{2.3}),
		\begin{align}
			\Delta G&=\Delta\left|\nabla w\right|^2+\Delta(aw)\nonumber\\
			&=2\left|D^2w\right|^2+2\left\langle\nabla w,\nabla \Delta w\right\rangle+2Ric(\nabla w,\nabla w)+a\Delta w+w\Delta a+2\left\langle \nabla a,\nabla w\right\rangle\nonumber\\
			&\geq\frac{2(\Delta w)^2}{n}-2\left\langle\nabla w,\nabla (G+b)\right\rangle-2(n-1)K\left|\nabla w\right|^2-a(G+b-1)+w\Delta a-\left|\nabla a\right|^2-\left|\nabla w\right|^2\nonumber\\
			&\geq \frac{2(G+b-1)^2}{n}-2\left\langle\nabla w,\nabla G\right\rangle-\left|\nabla b\right|^2-(G-aw-1)-2(n-1)K(G-aw-1)\nonumber\\
			&\quad-a(G+b-1)+w\Delta a-\left|\nabla a\right|^2-(G-aw-1).\nonumber
		\end{align}
		After recombining,
		\begin{align}
			\Delta G&\geq\frac{2G^2+2(b-1)^2}{n}-\left\lbrace 2(n-1)K+a+2-\frac{4(b-1)}{n}\right\rbrace G- 2\left\langle\nabla w,\nabla G\right\rangle\nonumber\\
			&\quad +\left\lbrace 2(n-1)Ka+2a+\Delta a\right\rbrace w-\left|\nabla a\right|^2-\left|\nabla b\right|^2-ab+a\nonumber\\
			&\geq\frac{2G^2}{n}-\left\lbrace 2(n-1)K+a+2-\frac{4(b-1)}{n}\right\rbrace G- 2\left\langle\nabla w,\nabla G\right\rangle\nonumber\\
			&\quad +\left\lbrace 2(n-1)Ka+2a+\Delta a\right\rbrace w-\left|\nabla a\right|^2-\left|\nabla b\right|^2+\left(\sqrt{\frac{2}{n}}(b-1)-\frac{1}{2}\sqrt{\frac{n}{2}}a\right)^2-\frac{n}{8}a^2.\label{2.4}
		\end{align}
		Since $a>0,0<aw<G, G>1$ and $K\geq0$, then by (\ref{2.4}),
		\begin{align}
			\Delta G&\geq\frac{2G^2}{n}-\left\lbrace 2(n-1)K+a-\frac{4b-4}{n}+2+\frac{(\Delta a)^- }{a}\right\rbrace G\nonumber\\
			&- 2\left\langle\nabla w,\nabla G\right\rangle-\left|\nabla a\right|^2-\left|\nabla b\right|^2-\frac{n}{8}a^2.\label{2.5}
		\end{align}
		Next, for any $l\geq0$ and $\eta\in C^\infty_0(B_{r})$, we multiply by $\eta^2G^l$ on both sides of (\ref{2.5}) and integrate on $B_{r}$, then
		\begin{align}
			&\int\eta^2G^l\Delta G\nonumber\\
			&\geq\int\eta^2G^l\left( \frac{2G^2}{n}-\left\lbrace 2(n-1)K+a-\frac{4b-4}{n}+2+\frac{(\Delta a)^- }{a}\right\rbrace G- 2\left\langle\nabla w,\nabla G\right\rangle\right) \nonumber\\
			&\quad-\int\eta^2G^{l+1}\left( \left|\nabla a\right|^2+\left|\nabla b\right|^2+\frac{n}{8}a^2\right).\label{2.6}
		\end{align}
		By Green formula, we have
		$$\int\eta^2G^{l+1}\Delta w=-\int \left\langle \nabla\left( \eta^2G^{l+1}\right), \nabla w \right\rangle =-\int (l+1)\eta^2G^l\left\langle \nabla G, \nabla w\right\rangle-\int 2\eta G^{l+1}\left\langle \nabla\eta, \nabla w\right\rangle,$$
		therefore
		\begin{align}
			&\int\eta^2G^l\left\langle \nabla G, \nabla w\right\rangle\nonumber\\&=-\frac{1}{l+1}\int\eta^2G^{l+1}\Delta w-\frac{2}{l+1}\int G^{l+1}\left\langle \nabla\eta, \eta\nabla w\right\rangle\nonumber\\
			&\leq -\frac{1}{l+1}\int\eta^2G^{l+1}\left(-G-b+1\right)+\frac{1}{l+1}\int G^{l+1}\left(\left|\nabla\eta \right|^2 +\eta^2\left|\nabla w \right|^2\right)\nonumber\\
			&\leq \frac{1}{l+1}\int\eta^2G^{l+2}+\frac{1}{l+1}\int \left( b\eta^2+\left|\nabla\eta \right|^2\right) G^{l+1}+\frac{1}{l+1}\int\eta^2G^{l+1}(G-aw-1)\nonumber\\
			&\leq \frac{2}{l+1}\int\eta^2G^{l+2}+\frac{1}{l+1}\int b\eta^2G^{l+1}+\frac{1}{l+1}\int G^{l+1}\left|\nabla\eta \right|^2.\label{2.7}
		\end{align}
		Combining (\ref{2.6}) and (\ref{2.7}) yields
		\begin{align}\label{2.8}
			&\int\eta^2G^l\Delta G\nonumber\\
			&\geq \int \left(\frac{2}{n}-\frac{4}{l+1}\right)\eta^2G^{l+2}-\left(2(n-1)K+2+a+\frac{(\Delta a)^- }{a}+\frac{2b}{l+1} -\frac{4b-4}{n}\right)\eta^2G^{l+1}\nonumber\\
			&\quad-\int\left( \left|\nabla a\right|^2+\left|\nabla b\right|^2+\frac{n}{8}a^2\right)\eta^2G^{l+1}-\frac{2}{l+1}\int\left|\nabla\eta \right|^2G^{l+1}.
		\end{align}
		Next we let $l\geq 2n-1$, then by (\ref{2.8}),
		\begin{align}\label{2.9}
			&\int\eta^2G^l\Delta G\nonumber\\
			&\geq-\left(2(n-1)K+2+a+\frac{4b^-+4}{n}+\frac{(\Delta a)^- }{a}+\left|\nabla a\right|^2+\left|\nabla b\right|^2+\frac{n}{8}a^2\right)\eta^2G^{l+1}\nonumber\\
			&\quad-\frac{2}{l+1}\int\left|\nabla\eta \right|^2G^{l+1}.
		\end{align}
		By (5.11) of \cite{DWZ}, utilizing integration by parts, we have the following general integral inequality,
		\begin{align}\label{2.10}
			&\int\left| \nabla\left( \eta G^{\frac{l+1}{2}}\right) \right|^2\nonumber\\
			&\leq-\frac{\left( l+1\right)^2 }{2l}\int\eta^2G^l\Delta G+\frac{\left( l+1\right)^2+l }{l^2} \int G^{l+1}\left| \nabla\eta\right|^2-\frac{l+1}{l}\int \eta G^{l+1}\Delta \eta.
		\end{align}
		Notice that
		$$\int\left| \nabla\left( \eta G^{\frac{l+1}{2}}\right) \right|^2=\int\left(\eta^2\left| \nabla\left(G^{\frac{l+1}{2}}\right) \right|^2+\left|\nabla\eta\right|^2G^{l+1}+2\eta G^{\frac{l+1}{2}}\left\langle\nabla\eta,\nabla\left(G^{\frac{l+1}{2}}\right)\right\rangle\right),$$
		\begin{align*}
			-\int \eta G^{l+1}\Delta \eta=\int\left\langle \nabla\eta,\nabla\left(\eta G^{l+1} \right)\right\rangle&=\int\left(  \left| \nabla\eta\right|^2G^{l+1}+\eta\left\langle\nabla\eta,\nabla G^{l+1} \right\rangle \right)\\
			&=\int\left(  \left| \nabla\eta\right|^2G^{l+1}+2\eta G^{\frac{l+1}{2}}\left\langle\nabla\eta,\nabla\left(G^{\frac{l+1}{2}}\right)\right\rangle\right).
		\end{align*}
		Also,
		\begin{align*}
			&\int\left| \nabla\left( \eta G^{\frac{l+1}{2}}\right) \right|^2+\frac{l+1}{l}\int2\eta G^{\frac{l+1}{2}}\left\langle\nabla\eta,\nabla\left(G^{\frac{l+1}{2}}\right)\right\rangle\\
			&=\int\left(\eta^2\left| \nabla\left(G^{\frac{l+1}{2}}\right) \right|^2+\left|\nabla\eta\right|^2G^{l+1}-\frac{2}{l}\eta G^{\frac{l+1}{2}}\left\langle\nabla\eta,\nabla\left(G^{\frac{l+1}{2}}\right)\right\rangle\right)\\
			&\geq\frac{l-1}{l}\int\left| \nabla\left( \eta G^{\frac{l+1}{2}}\right) \right|^2.
		\end{align*}
		Combining these relational expressions and (\ref{2.10}), we have
		\begin{align}\label{2.11}
			&\int\left| \nabla\left( \eta G^{\frac{l+1}{2}}\right) \right|^2\nonumber\\
			&\leq-\frac{\left( l+1\right)^2 }{2(l-1)}\int\eta^2G^l\Delta G+\frac{\left( l+1\right)^2+l(l+2)}{l(l-1)} \int G^{l+1}\left| \nabla\eta\right|^2.
		\end{align}
		Then by (\ref{2.11}) and the assumption $G>1$, we infer from (\ref{2.9}) that
		\begin{align}
			&\int\left| \nabla\left( \eta G^{\frac{l+1}{2}}\right) \right|^2\nonumber\\
			&\leq\frac{\left( l+1\right)^2 }{2(l-1)}\int \left(2(n-1)K+2+a+\frac{4b^-+4}{n}+\frac{(\Delta a)^- }{a}+\left|\nabla a\right|^2+\left|\nabla b\right|^2+\frac{n}{8}a^2\right)\eta^2G^{l+1}\nonumber\\
			&\quad+\frac{l\left(2l+3\right)+(l+1)^2}{l(l-1)}\int \left| \nabla\eta\right|^2G^{l+1}.\label{2.12}
		\end{align}
		By the Sobolev inequality (\ref{2.1}) with $\varphi=\eta G^{\frac{l+1}{2}}$ and $0<r\leq1$, (\ref{2.11}) gives
		\begin{align}\label{2.13}
			&\left( \fint_{B_r}\left(\eta^2 G^{l+1}\right)^{\frac{n}{n-2}}\right)^{\frac{n-2}{n}}\nonumber\\
			&\leq C_Sl\fint_{B_r}\left(2(n-1)K+3+a+\frac{4b^-+4}{n}+\frac{(\Delta a)^- }{a}+\left|\nabla a\right|^2+\left|\nabla b\right|^2+\frac{n}{8}a^2\right) \eta^2G^{l+1}\nonumber\\
			&\quad+C_Sr^2l\fint_{B_r}\left| \nabla\eta\right|^2G^{l+1}.
		\end{align}
		Furthermore, by interpolation inequality(cf. \cite[section 3]{WW}) with $p>\frac{n}{2}$ and $\mu=\frac{n}{n-2}$, there exists a constant $C_2=C_2\left(n,p\right)$ such that
		\begin{equation}\label{2.13*}
			C_Sl\fint a\eta^2G^{l+1}\leq\frac{1}{10}\left(\fint\left( \eta^2G^{l+1}\right)^\mu\right)^{\frac{1}{\mu}}+C_2\left(C_Sl \left| \left| a\right| \right|^*_{p, B_r}\right)^{\frac{2p}{2p-n}}\fint\eta^2G^{l+1}.
		\end{equation}
		Applying similar interpolation inequalities to the remainder terms and plugging these inequalities into (\ref{2.13}) implies that there exists a constant $$C_3=C_3\left( C_2,n,p,K,\left| \left| a\right| \right|^*_{2p, B_r}, \left| \left| b^-\right| \right|^*_{p, B_r}, \left| \left| \frac{(\Delta a)^- }{a}\right| \right|^*_{p, B_r}, \left| \left|\nabla a\right| \right|^*_{2p, B_r},\left| \left|\nabla b\right| \right|^*_{2p, B_r}\right)$$
		such that
		\begin{align}
			\left( \fint_{B_r}\left(\eta^2 G^{l+1}\right)^{\frac{n}{n-2}}\right)^{\frac{n-2}{n}}\leq C_3\left(C_Sl\right)^{\frac{2p}{2p-n}}\fint\eta^2v^{l+1}+10C_Sr^2l\fint_{B_r}\left| \nabla\eta\right|^2G^{l+1}.\label{2.14}
		\end{align}
		Here we used the monotonic inequality
		$$\left| \left|f\right| \right|^*_{p, B_r}\leq\left| \left| f\right| \right|^*_{2p, B_r}.$$
		
		Let $\phi(s)$ be a non-negative $C^2$-smooth function on $\left[0,+\infty \right)$ such that $\phi(s)=1$ for $s\leq \frac{1}{2}$, $\phi(s)=0$ for $ s\geq1$, and $-4\leq\phi'\leq0$. Next, let $\eta(y)=\phi\left(\frac{d(y,x)}{r}\right)$ where $d(y,x)$ denotes the distance from $y$ to $x$ and it is obvious that $\eta(y)$ is supported in $B_r$:
		\begin{align*}
			&\eta|_{B_\frac{r}{2}}=1,\\
			&\eta|_{M\backslash B_r}=0.
		\end{align*}
		Clearly,
		\begin{align*}
			\left| \nabla  \eta\right|^2\leq\frac{16}{r^2}.
		\end{align*}
		Recall $l\geq 2n-1$, then by standard iteration(e.g. see \cite[section 3]{WW}), for some constant $C_4=C_4(n,C_3,C_S,p)$,
		\begin{equation}\label{2.15}
			\sup\limits_{B_\frac{r}{2}}G\leq C_5\left\|G\right\|^*_{2n-1, B_r}.
		\end{equation}
		Since $Ric\geq-(n-1)Kg$, by Bishop-Gromov volume comparison theorem, the geodesic ball $B_r$ satisfies the volume doubling property, hence by the standard trick of lowering power(cf. \cite[Lemma 2.5]{WW}), (\ref{2.15}) can be improved to
		\begin{equation}\label{2.16}
			\sup\limits_{B_\frac{r}{2}}G\leq C_5\left\|G\right\|^*_{1, B_\frac{4r}{5}},
		\end{equation}
		where $C_5=C_5\left(C_3,C_4,C_S,n,K\right).$
		
		Next, we choose some $\eta\in C^\infty_0(B_r)$ such that
		$$\eta\equiv1\quad\mbox{on}\hspace*{0.3em}B_\frac{4r}{5}\quad\mbox{and}\quad \left| \nabla\eta\right|\leq\frac{10}{r}.$$
		For $\eta^2G$, since $aw>0$ and by (\ref{2.3}), there holds
		\begin{align*}
			\int\eta^2G=-\int\eta^2\left( \Delta w+b-1\right)&=2\int\left\langle \nabla\eta, \eta\nabla w\right\rangle-\int\eta^2(b-1)\nonumber\\
			&\leq2\int \left| \nabla\eta\right|^2+\frac{1}{2}\int\eta^2\left| \nabla w\right|^2-\int\eta^2(b-1)\nonumber\\
			&=2\int\left|\nabla\eta\right|^2+\frac{1}{2}\int\eta^2G-\frac{1}{2}\int\eta^2(aw+1)-\int\eta^2(b-1)\\
			&\leq 2\int \left| \nabla\eta\right|^2+\frac{1}{2}\int\eta^2G+\int\eta^2b^-+\frac{1}{2}\int\eta^2.
		\end{align*}
		Therefore,
		\begin{align}\label{2.17}
			\int\eta^2G\leq4\int \left| \nabla\eta\right|^2+2\int\eta^2b^-+\int\eta^2.
		\end{align}
		As a consequence of (\ref{2.17}),
		\begin{align}\label{2.18}
			0<\left\|G\right\|^*_{1, B_\frac{4r}{5}}\leq\frac{\int\eta^2G}{\left| B_\frac{4r}{5}\right|}\leq\frac{\left( 400+2\left| \left| b^-\right| \right|^*_{p, B_r}+1\right)\left| B_r\right|}{\left| B_\frac{4r}{5}\right|r^2}.
		\end{align}
		Finally,  by the definition of $G$, combining the volume comparison theorem and (\ref{2.18}) completes the proof.
		
		\textbf{(2)} (i) The proof is almost the same as above with a few minor modifications. Let $w=\ln u$ and $G=\left|\nabla w\right|^2+aw+1$. Without loss of generality, we may assume $w<0(i.e. \hspace*{0.5em}a(x)w(x)>0,\hspace*{0.5em}u(x)<1 \hspace*{0.5em}\mbox{and} \hspace*{0.5em}G(x)>1)$. Then the estimate (\ref{1.4}) follows verbatim.
		
		As for (ii), we need a few minor modifications based on the proof as above. Here for simplicity, we only consider the case of $a(x)<0$ is constant. Let $w=\ln u$ and $G=\left|\nabla w\right|^2-aw+1$. By (\ref{1.1}), direct computations imply
		\begin{equation*}
			\Delta w+\left|\nabla w\right|^2+aw+b=0,
		\end{equation*}
		and hence
		\begin{equation*}
			\Delta w+G+2aw+b-1=0.
		\end{equation*}
		Without loss of generality, we may assume $w>0(i.e. \hspace*{0.5em}a(x)w(x)<0,\hspace*{0.5em}u(x)>1 \hspace*{0.5em}\mbox{and} \hspace*{0.5em}G(x)>1)$. Then the same arguments as above give the desired estimate of the upper bound of $\ln u$. The only difference is that during the iterative process, we need $\left\|w^+\right\|^*_{p, B_r}$ to control the term $-\fint_{B_r}w\eta^2G^{l+1}$ as we did in (\ref{2.13*}).
	\end{proof}

	\begin{remark}
		Since all the elements we need, such as the Sobolev inequality and comparison geometry, have counterparts under integral Ricci curvature conditions, hence the above local $C^0$ and the following $C^1$ estimates of the solution $u$ are still valid under integral Ricci curvature conditions. See \cite{WW} for related discussions.
	\end{remark}

	\section{$C^1$ estimates}\label{sec3}
	\begin{proof}[\textbf{Proof of Theorem \ref{t1.2}}]
		\textbf{(1)}
		By Theorem \ref{t1.1}, there exists a constant $D>0$ such that $u\leq D$ on $B_r$. Set $w=u^{\frac{1}{q}}$ for some $q>1$, then by (\ref{1.1}), $w$ satisfies
		\begin{equation}\label{3.1}
			\Delta w+(q-1)\frac{\left| \nabla w\right|^2}{w}+aw\log w+\frac{bw}{q}=0.
		\end{equation}
		Let $v=(q-1)\frac{\left| \nabla w\right|^2}{w}$ and $b'=\frac{b}{q}$. Without loss of generality, we may assume $v\geq1$, since otherwise we may consider $v+1$ instead of $v$.
		By Bochner formula,
		\begin{align}
			\Delta v=&(q-1)\Delta\left( \frac{\left| \nabla w\right|^2}{w}\right)\nonumber\\
			=&(q-1)\left[ \frac{\Delta\left| \nabla w\right|^2}{w}+\left| \nabla w\right|^2\Delta\left( \frac{1}{w}\right)+2\left\langle \nabla(\left| \nabla w\right|^2), \nabla\left(\frac{1}{w} \right) \right\rangle  \right] \nonumber\\
			\geq&(q-1)\left[ \frac{2\left| D^2 w\right|^2+2\left\langle\nabla w, \nabla\Delta w \right\rangle - 2(n-1)K\left| \nabla w\right|^2 }{w}+\left| \nabla w\right|^2\left( -\frac{\Delta w}{w^2}+\frac{2\left| \nabla w\right|^2}{w^3}\right)\right]\nonumber\\
			& +2(q-1)\left\langle \nabla\left| \nabla w\right|^2, -\frac{\nabla w}{w^2} \right\rangle.\label{3.2}
		\end{align}
		Substituting the equations $\Delta w=-v-aw\log w-b'w$ and $\left| \nabla w\right|^2=\frac{vw}{q-1}$ into (\ref{3.2}) gives
		\begin{align}\label{3.3}
			\Delta v\geq&(q-1)\left[ \frac{2\left| D^2 w\right|^2}{w}+\frac{v^2}{(q-1)w}-\left( \frac{2}{w}+\frac{2}{(q-1)w}\right)\left\langle\nabla w,\nabla v \right\rangle\right]\nonumber\\
			&-2(q-1)\left( \ln w\left\langle\nabla a,\nabla w\right\rangle+\left\langle \nabla w, \nabla b'\right\rangle\right)
			-\left(2(n-1)K+a\log w+b'+2a\right) v.
		\end{align}
		Moreover,
		$$-2\left\langle \nabla w, \nabla b'\right\rangle\geq-\frac{\left| \nabla w\right|^2}{w}-w\left| \nabla b'\right|^2,$$
		\begin{align*}
			-2\ln w\left\langle\nabla a,\nabla w\right\rangle\geq-2 \left|\ln w\right|\left|\nabla a\right|\left|\nabla w\right|&=-2\left|\nabla a\right|\left|\ln w\right|\sqrt{\frac{wv}{q-1}}\\
			&=-2\left|\nabla a\right|\left(\sqrt{w}\left|\ln w\right|\right)\sqrt{\frac{v}{q-1}}\\
			&\geq-\frac{\max\left\lbrace 2e^{-1},D^{\frac{1}{2}}\left| \ln D\right| \right\rbrace }{(q-1)^{\frac{1}{2}}}\left(\left|\nabla a\right|^2+v\right)\\
			&=:-C_1\left(\left|\nabla a\right|^2+v\right),
		\end{align*}
		
		hence (\ref{3.3}) with $v\geq1$ leads to
		\begin{align}\label{3.4}
			\Delta v\geq&(q-1)\left[ \frac{v^2}{(q-1)w}-\left( \frac{2}{w}+\frac{2}{(q-1)w}\right)\left\langle\nabla w,\nabla v \right\rangle\right]-\left\lbrace C_2+a\left(2+\ln D\right)+b'\right\rbrace v\nonumber\\
			&-2\left(q-1\right)\left(D^{\frac{1}{q}}\left| \nabla b'\right|^2+C_1\left| \nabla a\right|^2\right)v,
		\end{align}
		where $C_2=C_2\left(n,q,K,D,C_1\right)$.
		
		Next, for any $l\geq0$ and $\eta\in C^\infty_0\left( B_r\right) $, we multiply by $\eta^2v^l $ on the both sides of (\ref{3.4}), then
		\begin{align}\label{3.5}
			\int\eta^2v^l\Delta v\geq&\int\left( \frac{\eta^2v^{l+2}}{w}-2q\eta^2v^l\left\langle\nabla \log w, \nabla v \right\rangle-\left\lbrace C_2+a\left(2+(\ln D)^+\right)+(b')^+\right\rbrace \eta^2v^{l+1}\right)\nonumber\\
			&-2\int\left(q-1\right)\left(D\left| \nabla b'\right|^2+C_1\left| \nabla a\right|^2\right)\eta^2v^{l+1}.
		\end{align}
		By Green‘s formula and Cauchy-Schwartz inequality,
		\begin{align}
			&\int\eta^2v^l\left\langle \nabla v, \nabla \log w\right\rangle\nonumber\\
			&=-\frac{1}{l+1}\int\eta^2v^{l+1}\Delta \log w-\frac{2}{l+1}\int v^{l+1}\left\langle \nabla\eta, \eta\nabla \log w\right\rangle\nonumber\\
			&\leq -\frac{1}{l+1}\int\eta^2v^{l+1}\left( \frac{\Delta w}{w}-\frac{\left| \nabla w\right|^2}{w^2}\right)+\frac{1}{l+1}\int v^{l+1}\left(\left|\nabla\eta \right|^2 +\eta^2\frac{\left| \nabla w\right|^2}{w^2}\right)\label{3.6}.
		\end{align}
		Substituting the equations $\Delta w=-v-aw\log w-b'w$ and $\left| \nabla w\right|^2=\frac{vw}{q-1}$ into (\ref{3.6}) gives
		\begin{equation}\label{3.7}
			\begin{split}
				&\int\eta^2v^l\left\langle \nabla v, \nabla \log w\right\rangle\\
				&\leq\left( \frac{1}{l+1}+\frac{2}{(l+1)(q-1)}\right)\int\frac{\eta^2v^{l+2}}{w}+\frac{1}{l+1}\left( \int C_3\eta^2v^{l+1}+\int\left| \nabla\eta \right|^2v^{l+1}\right),
			\end{split}
		\end{equation}
		where $$C_3=\left( a\log D^{\frac{1}{q}}\right)^++(b')^+.$$
		Combining (\ref{3.5}) and (\ref{3.7}), for some constant $C_4=C_4(q,D,C_1,C_2)$,
		\begin{align*}
			&\int\eta^2v^l\Delta v\nonumber\\
			&\geq\int\left[1-\left(\frac{2q}{l+1}+\frac{4q}{(l+1)(q-1)} \right)\right]\frac{\eta^2v^{l+2}}{w}\\
			&\quad-C_4\left( \int \left(a+(b)^++\left| \nabla a\right|^2+\left| \nabla b\right|^2\right)\eta^2v^{l+1}+\int\left| \nabla\eta \right|^2v^{l+1}\right).
		\end{align*}
		Let $l+1\geq2q+\frac{4q}{q-1}$, then $1-\left(\frac{2q}{l+1}+\frac{4q}{(l+1)(q-1)}\right)\geq0$, and hence
		\begin{equation}\label{3.8}
			\int\eta^2v^l\Delta v\geq-C_4\left( \int \left(a+(b)^++\left| \nabla a\right|^2+\left| \nabla b\right|^2\right)\eta^2v^{l+1}+\int\left| \nabla\eta \right|^2v^{l+1}\right).
		\end{equation}
		Plugging (\ref{3.8}) into the integral inequality (\ref{2.11}) implies
		\begin{align*}
			&\int\left| \nabla\left( \eta v^{\frac{l+1}{2}}\right) \right|^2\nonumber\\
			&\leq\frac{\left(l+1\right)^2C_4}{2(l-1)}\int  \left(a+(b)^++\left| \nabla a\right|^2+\left| \nabla b\right|^2\right) \eta^2v^{l+1}+\frac{(lC_4+2)(l+1)^2+2l(l+2)}{2l(l-1)}\int\left| \nabla\eta\right|^2v^{l+1}.
		\end{align*}
		Then following the same iteration steps as to get (\ref{2.16}), there exists a constant $$C_5=C_5\left(n,p,q,K,C_S,C_4,\left| \left| a\right| \right|^*_{p, B_r},\left| \left| b^+\right| \right|^*_{p, B_r}, \left| \left|\nabla a\right| \right|^*_{2p, B_r}, \left| \left|\nabla b\right| \right|^*_{2p, B_r}\right)>0$$
		such that
		\begin{equation}\label{3.10}
			\left| \left| v\right| \right|_{\infty,B_{\frac{r}{2}}}\leq C_5\left| \left| v\right| \right|^*_{1,B_\frac{4r}{5}}.
		\end{equation}
		
		Next, we choose some $\eta\in C^\infty_0(B_r)$ such that
		$$\eta\equiv1\quad\mbox{on}\hspace*{0.3em}B_\frac{4r}{5}\quad\mbox{and}\quad \left| \nabla\eta\right|\leq\frac{10}{r}.$$
		For $\eta^2v$, there holds
		\begin{equation}\label{3.11}
			\begin{split}
				\int\eta^2v=-\int\eta^2\left( \Delta w+aw\log w+b'w\right).
			\end{split}
		\end{equation}
		By Green's formula,
		$$-\int\eta^2\Delta w=2\int\left\langle \nabla\eta, \eta\nabla w\right\rangle\leq\frac{1}{2}\int\eta^2v+2\int\frac{w\left| \nabla\eta\right|^2}{q-1}\leq\frac{1}{2}\int\eta^2v+\frac{200D^{\frac{1}{q}}}{(q-1)r^2}\left| B_r\right|.$$
		On the other hand, we also have
		$$-\int\eta^2(aw\log w+b'w)\leq\left( \frac{\left| \left| a\right| \right|^*_{p, B_r}}{e}+\frac{\left| \left| b^-\right| \right|^*_{p, B_r}D^{\frac{1}{q}}}{q}\right)\left|B_r \right|.$$
		Here, we have used the following facts that for $w\in(0, +\infty)$ there holds
		$$-w\log w\leq\frac{1}{e}.$$
		Now, by (\ref{3.11}) and $0<r\leq1$, we estimate
		\begin{align}\label{3.12}
			\left| \left| v\right| \right|^*_{1,B_\frac{4r}{5}}\leq\frac{\int\eta^2v}{\left|B_{\frac{4r}{5}}\right|}&\leq\frac{\left(\frac{400D^{\frac{1}{q}}}{(q-1)r^2}+\frac{2\left| \left| a\right| \right|^*_{p, B_r}}{e}+\frac{2\left| \left|b^-\right| \right|^*_{p, B_r}D^{\frac{1}{q}}}{q}\right)\left|B_r \right|}{\left|B_{\frac{4r}{5}} \right|}\nonumber\\
			&\leq\frac{\left(\frac{400D^{\frac{1}{q}}}{(q-1)}+\frac{2\left| \left| a\right| \right|^*_{p, B_r}}{e}+\frac{2\left| \left| b^-\right| \right|^*_{p, B_r}D^{\frac{1}{q}}}{q}\right)\left|B_r \right|}{r^2\left|B_{\frac{4r}{5}} \right|}.
		\end{align}
		Substituting (\ref{3.12}) into (\ref{3.10}) and noticing $$\frac{\left|\nabla w \right|^2}{w}=\frac{\left|\nabla u \right|^2u^{\frac{1}{q}-2}}{q^2},$$
		then by volume comparison theorem, we get the desired estimate
		\begin{equation*}
			\sup\limits_{B\left( x,\frac{r}{2}\right) }\frac{\left|\nabla u\right|}{u^{1-\frac{1}{2q}}} \leq \frac{C}{r},
		\end{equation*}
		where $C=C\left( n,K,p,q,A_1, \left| \left| a\right| \right|^*_{2p, B_r}, \left| \left| b\right| \right|^*_{p, B_r}, \left| \left|(\Delta a)^- \right| \right|^*_{p, B_r}, \left| \left|\nabla a\right| \right|^*_{2p, B_r},\left| \left|\nabla b\right| \right|^*_{2p, B_r}\right)$.
		
		\textbf{(2)} The proof is the same as above with a few minor modifications and the extra condition $u\leq D$ is used to control $\left\langle\nabla b,\nabla w\right\rangle$ and $\ln w\left\langle\nabla a,\nabla w\right\rangle$.
	\end{proof}
	
	Due to the previous $C^0$, $C^1$ estimates and standard bootstrapping arguments for elliptic PDEs, we immediately have following regularity of $u$.
	\begin{corollary}\label{c3.1}
		Let $u>0$ be a positive (weak) solution to the equation (\ref{1.1}) on  $B_r\subset(M^n,g)$ with $a(x), b(x)\in C^\infty\left(B_r\right)$. Suppose $a(x)>0$, or $a(x)<0$ and $u$ is bounded from above, then $u\in C^\infty\left(B_{r'}\right)$ for any $B_{r'}\subset\subset B_r$.
	\end{corollary}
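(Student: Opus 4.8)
\textbf{Proof proposal for Corollary \ref{c3.1}.} The plan is a two-stage elliptic bootstrap: first I would upgrade $u$ from a bounded positive weak solution to a \emph{continuous} one, which automatically supplies a strictly positive local lower bound; and then, with $u$ trapped between two positive constants, run the usual Schauder iteration to reach $C^\infty$. The reason for splitting into two stages is that the nonlinearity $u\log u$ would be genuinely singular if $u$ were allowed to touch $0$, so one cannot start the Schauder scheme before ruling that out.

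For the first stage, fix nested balls $B_{r'}\subset\subset B_{r''}\subset\subset B_r$. If $a(x)>0$, then $a$ is continuous and positive on the compact set $\overline{B_{r''}}$, so $A_1:=\min_{\overline{B_{r''}}}a>0$ and \eqref{1.3} (applied on suitable subballs of $B_{r''}$) gives a bound $u\le D$ there; if $a(x)<0$, the bound $u\le D$ is part of the hypothesis. Enlarging $D$ if necessary we assume $D\ge 1$. Now write the equation as $\Delta u=f$ with $f:=-a\,u\log u-b\,u$. Since the elementary bound $|s\log s|\le\max\{e^{-1},\,D\log D\}$ holds for all $s\in(0,D]$, and $a,b$ are bounded on $\overline{B_{r''}}$ (being continuous), we get $f\in L^\infty(B_{r''})$. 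Because $g$ is smooth, in local charts $\Delta$ is a uniformly elliptic second order operator with smooth coefficients, so the interior $L^p$ (Calder\'on--Zygmund) estimates apply to the weak solution $u\in W^{1,2}_{\mathrm{loc}}\cap L^\infty_{\mathrm{loc}}$ and yield $u\in W^{2,p}_{\mathrm{loc}}(B_{r''})$ for every $p\in(1,\infty)$; by Morrey's embedding $u\in C^{1,\alpha}(\overline{B_{r'}})$ for every $\alpha\in(0,1)$. In particular $u$ is continuous on the compact set $\overline{B_{r'}}$, hence $u\ge c_0>0$ there for some constant $c_0$.

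For the second stage, on $B_{r'}$ we now have $0<c_0\le u\le D$ together with $u\in C^{1,\alpha}$, so $\log u\in C^{1,\alpha}(\overline{B_{r'}})$ and therefore $u\log u\in C^{0,\alpha}$; combined with $a,b\in C^\infty$ this gives $f\in C^{0,\alpha}(\overline{B_{r'}})$, and the interior Schauder estimates yield $u\in C^{2,\alpha}_{\mathrm{loc}}(B_{r'})$. I would then argue by induction: if $u\in C^{k,\alpha}$ on some subdomain with $k\ge 1$, then since $u$ is bounded away from $0$ we have $\log u\in C^{k,\alpha}$, hence $u\log u\in C^{k,\alpha}$ and $f=-a\,u\log u-b\,u\in C^{k,\alpha}$, so Schauder gives $u\in C^{k+2,\alpha}$ on a slightly smaller ball. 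Exhausting $B_{r'}$ by such balls and letting $k\to\infty$ gives $u\in C^\infty(B_{r'})$, which is the assertion.

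I do not expect a serious obstacle here: the content is the observation that $s\mapsto s\log s$, although not Lipschitz, is \emph{bounded} on $(0,D]$, so that the $L^p$ theory already applies to the bounded (but not yet continuous) weak solution and produces continuity — and hence the positive lower bound — after which everything is the standard elliptic iteration using smoothness of $g$, $a$, $b$. The only point requiring a little care is to keep track of the shrinking balls in the induction, and (on a manifold) that all the cited interior estimates are local statements in coordinate charts.
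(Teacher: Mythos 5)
Your proof is correct, and the overall strategy (elliptic bootstrap on $\Delta u = -a\,u\log u - b\,u$) is the same as the paper's, but your route to the starting regularity is somewhat different and more self-contained. The paper invokes both Theorem \ref{t1.1} and Theorem \ref{t1.2}: it uses the $C^1$ estimate directly to assert that $u$, $|\nabla u|$, $u\ln u$, and $|\nabla(u\ln u)|$ are all locally bounded (the last via $|\nabla(u\ln u)|\le(|\ln u|+1)|\nabla u|\le C(|\ln u|+1)u^{1-\frac{1}{2q}}/r$, which stays bounded even where $u$ is small), and then appeals to ``standard bootstrapping.'' You instead use only the $C^0$ bound (Theorem \ref{t1.1} or the hypothesis when $a<0$), observe that $s\mapsto s\log s$ is bounded on $(0,D]$, feed the resulting $L^\infty$ right-hand side into Calder\'on--Zygmund $L^p$ theory to get $u\in W^{2,p}_{\mathrm{loc}}$ and hence $u\in C^{1,\alpha}_{\mathrm{loc}}$, and \emph{then} extract the strictly positive local lower bound from continuity plus $u>0$. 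That explicit lower-bound step is a point the paper's one-line argument leaves implicit, even though it is genuinely needed for the later iterations (beyond the first bootstrap one must control $\nabla^2(u\log u)$, which involves $|\nabla u|^2/u$ and $\ln u\,\nabla^2 u$, and these require $u$ bounded away from zero on compact subsets). What the paper's route buys is efficiency in context — Theorem \ref{t1.2} is already proved and immediately hands over the $W^{1,\infty}$ bound on the nonlinearity; what your route buys is independence from Theorem \ref{t1.2} and a cleaner separation between ``boundedness implies continuity'' and ``positivity plus continuity implies smooth bootstrap,'' which makes the role of the lower bound transparent.
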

	\begin{proof}
		By our estimates of Theorem \ref{t1.1} and \ref{t1.2}, we know $u, \left|\nabla u\right|,u\ln u,\left| \nabla(u\ln u)\right|$ are all locally bounded, where for $q>1$, we estimate
		$$\left| \nabla(u\ln u)\right|\leq\left|\ln u\right| \left| \nabla u\right|+\left| \nabla u\right|\leq\frac{C\left( \left|\ln u\right|+1\right)u^{1-\frac{1}{2q}}}{r}\leq\frac{C}{r}.$$
		Then standard bootstrapping arguments give the desired regularity (cf. \cite[section 8]{GT}).
	\end{proof}
	
	From the view of specific geometric problems, integrable solutions to (\ref{1.1}) with $a(x)>0$ are the most important ones such as the minimizers of $\mathcal{W}$ entropy. Based on the previous estimates, we find some further properties of these solutions on non-compact manifolds.
	\begin{corollary}\label{c3.3}
		Let $u>0$ be a positive solution to the equation (\ref{1.1}) on a complete non-compact and non-collapsing Riemannian manifold $(M^n,g)$ with $Ric\geq-(n-1)Kg$ for some $K\geq0$. Suppose on $(M^n,g)$, $0<A_1\leq a\leq A_2$, $\left|\nabla a\right|\leq A_3$, $(\Delta a)^-\leq A_4$, $\left|b\right|\leq B_1$, $\left|\nabla b\right|\leq B_2$ and $\int_{M^n}u^k<+\infty$ for some $k>0$, then $u(x)\longrightarrow0$ as $x\longrightarrow\infty$ uniformly. Especially, in this situation, $u$ has a maximum point $x_0$ such that
		$$u(x_0)\geq e^{-\frac{B_1}{A_1}}.$$
	\end{corollary}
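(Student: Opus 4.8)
The plan is to combine the local $C^0$ estimate \eqref{1.3} from Theorem \ref{t1.1}(1) with the integrability hypothesis $\int_{M^n}u^k<+\infty$ to force $u$ to decay at infinity, and then to produce a global maximum together with the stated lower bound at that maximum. First I would observe that under the stated uniform hypotheses ($0<A_1\le a\le A_2$, $|\nabla a|\le A_3$, $(\Delta a)^-\le A_4$, $|b|\le B_1$, $|\nabla b|\le B_2$), all the averaged norms appearing in the constant $C$ of Theorem \ref{t1.1}(1) — namely $\|a\|^*_{2p,B_r}$, $\|b^-\|^*_{p,B_r}$, $\|(\Delta a)^-/a\|^*_{p,B_r}$, $\|\nabla a\|^*_{2p,B_r}$, $\|\nabla b\|^*_{2p,B_r}$ — are bounded uniformly in the center of the ball by fixed constants depending only on the $A_i$, $B_j$; hence on \emph{any} unit ball $B(x,2)$ we get a \emph{center-independent} bound of the form $\sup_{B(x,1/2)}u\le e^{C/A_1}=:D_0$, and more importantly, for a radius-$R$ ball (with $0<R\le 1$) the constant $C=C(n,K,p,A_i,B_j)$ is uniform and $\sup_{B(x,R/2)}u\le e^{C/(A_1R^2)}$.

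Next I would turn this local sup bound into pointwise decay using the $L^k$ bound. Fix $x$ far out and apply the sup estimate on a small ball $B(x,2R)$ with $R\le 1$ to be chosen: by the mean value–type inequality behind the Nash–Moser iteration (the lowering-of-power step, as in \cite[Lemma 2.5]{WW}, which the proof of Theorem \ref{t1.1} already invokes to get \eqref{2.16}), one has in fact
\begin{equation*}
\sup_{B(x,R/2)}u^k\le \frac{C(n,K,p,A_i,B_j)}{R^{2}}\,\fint_{B(x,R)}u^k\le \frac{C}{R^{2}\,|B(x,R)|}\int_{B(x,R)}u^k .
\end{equation*}
Since $(M^n,g)$ is non-collapsing, $|B(x,R)|\ge v_0 R^n$ for a uniform $v_0>0$ (for $R\le 1$), so $\sup_{B(x,R/2)}u^k\le C R^{-n-2}\int_{B(x,R)}u^k$. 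Because $\int_{M^n}u^k<+\infty$, the tail $\int_{B(x,1)}u^k\to 0$ as $x\to\infty$; applying the displayed inequality with the fixed radius $R=1$ gives $u(x)^k\le C\int_{B(x,1)}u^k\to 0$, i.e.\ $u(x)\to 0$ uniformly as $x\to\infty$. (If one prefers, this also follows directly from \eqref{1.3} together with a covering argument, but the non-collapsing hypothesis makes the $L^k$-to-$L^\infty$ passage clean.)

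Finally, for the maximum point: since $u>0$ is continuous, $u\to 0$ at infinity, and $u$ is not identically zero, $u$ attains its supremum at some interior point $x_0\in M^n$. At $x_0$ we have $\nabla u(x_0)=0$ and $\Delta u(x_0)\le 0$, so equation \eqref{1.1} gives $a(x_0)u(x_0)\ln u(x_0)+b(x_0)u(x_0)=-\Delta u(x_0)\ge 0$; dividing by $u(x_0)>0$ and by $a(x_0)>0$ yields $\ln u(x_0)\ge -b(x_0)/a(x_0)\ge -B_1/A_1$, hence $u(x_0)\ge e^{-B_1/A_1}$, as claimed. The one point requiring care is the $L^k$-to-$L^\infty$ mean value inequality for $k$ possibly less than $1$: the standard trick of lowering the power (volume doubling, which holds here by Bishop--Gromov since $Ric\ge -(n-1)Kg$) handles arbitrary $k>0$, exactly as in the passage from \eqref{2.15} to \eqref{2.16} in the proof of Theorem \ref{t1.1}; I expect this to be the main technical obstacle, while the maximum-point argument and the uniform-constant bookkeeping are routine.
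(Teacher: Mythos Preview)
Your maximum-point argument matches the paper's exactly. For the decay $u\to 0$, however, the paper takes a different route: it does not use a mean-value inequality for $u$, but instead invokes Theorem~\ref{t1.2}(1) together with the $C^0$ bound to get a uniform gradient bound $|\nabla u|\le C\,u^{1-1/(2q)}\le C\,D_0^{1-1/(2q)}$, and then argues by contradiction --- if $u(x_k)\ge c>0$ along a divergent sequence, the gradient bound forces $u\ge c/2$ on a ball of fixed radius around each $x_k$, and non-collapsing keeps $\int_{B(x_k,1)}u^k$ bounded away from zero, contradicting $\int_{B(x,1)}u^k\to 0$.

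Your route has a gap in justification: the inequality $\sup_{B(x,R/2)}u^k\le C R^{-2}\fint_{B(x,R)}u^k$ is \emph{not} what \eqref{2.16} gives. The Nash--Moser iteration in the proof of Theorem~\ref{t1.1} is run on $G=|\nabla\ln u|^2+a\ln u+1$, and \eqref{2.16} is the statement $\sup G\le C\,\|G\|^*_1$ --- about $G$, not about $u$. To salvage your argument you would need a separate step: once $u\le D_0$ is known, the equation gives $\Delta u=-(a\ln u+b)u\ge -cu$ with $c=A_2(\ln D_0)^++B_1$, so $u$ is a subsolution of $\Delta+c$, and then a fresh (standard) Moser iteration delivers the mean-value inequality you want, including the lowering to exponent $k<1$. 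With that step filled in, your approach works and in fact bypasses Theorem~\ref{t1.2} entirely; but as written you are attributing to \eqref{2.16} an inequality the paper never proves.
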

	\begin{proof}
		Since $\int_{M^n}u^k<+\infty$, then for $x\longrightarrow\infty$,
		\begin{equation}\label{3.13}
			\int_{B(x,1)}u^k\longrightarrow0.
		\end{equation}
		However, by Theorem \ref{t1.1} and \ref{t1.2}, $\left|\nabla u\right|$ is uniformly bounded on $(M^n,g)$. Hence if there is a sequence $x_k\longrightarrow\infty$ such that $u\geq c>0$, then by the gradient estimate and the non-collapsing condition, $\int_{B(x,1)}u^k$ has a strict positive lower bound which contradicts (\ref{3.13}). Hence $u\longrightarrow0$ at infinity.
		
		Moreover, since $u\longrightarrow0$ at infinity, $u$ must have a maximum point $x_0$ and then by maximum principle, $\Delta u(x_0)\leq 0$, hence
		$$a(x_0)\ln u(x_0)+b(x_0)\geq0.$$
		Finally,
		$$u(x_0)\geq e^{-\frac{b(x_0)}{a(x_0)}}\geq e^{-\frac{B_1}{A_1}}.$$
	\end{proof}
	
	In form, the solution $u$ to equation (\ref{1.1}) is a generalization of positive harmonic functions, so one may expect that its gradient behaves like that of positive harmonic functions. Especially, one may ask whether there exists a constant $C>0$ such that on $(M^n,g)$ with $Ric\geq-(n-1)Kg$,
	\begin{equation}\label{3.14}
		\frac{\left|\nabla u\right|}{u}\leq C.
	\end{equation}
	As a supplement of our Theorem \ref{t1.2}, in the following, we claim that (\ref{3.14}) is impossible if $u\longrightarrow0$.
	\begin{proposition}\label{p3.3}
		Let $u>0$ be a positive solution to the equation (\ref{1.1}) on $B(x,r)\subset(M^n,g)$ with $0<r\leq 1$ and $Ric\geq-(n-1)Kg$ for some $K\geq0$. Suppose on $B(x,r)$, $0<A_1\leq a\leq A_2$, $\left|b\right|\leq B_1$ and
		\begin{equation}\label{3.15}
			\frac{\left|\nabla u\right|}{u}\leq C_1,
		\end{equation}
		then there exists a constant $C=C\left( n,K,A_1,B_1,C_1\right)>0$ such that
		\begin{equation}\label{3.15*}
			u(x)\geq e^{-C}.
		\end{equation}
		Consequently, there is no $u$ such that (\ref{3.15}) is valid but $u\longrightarrow0$. Especially, if (\ref{3.15}) holds on $(M^n,g)$, there is no $u$ satisfying the conditions of Corollary \ref{c3.3}.
	\end{proposition}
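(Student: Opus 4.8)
The plan is to pass to $w=\ln u$ and read off a pointwise lower bound for $w$ at the center $x$ from (\ref{3.15}) by a single integration against a cut-off. Since $u>0$ solves (\ref{1.1}), the function $w$ satisfies $\Delta w+|\nabla w|^{2}+aw+b=0$ (cf.\ (\ref{2.2})), and (\ref{3.15}) becomes $|\nabla w|\le C_{1}$ on $B(x,r)$; in particular $w$ is Lipschitz there, so that, using $r\le1$,
\begin{equation*}
w(y)\ \ge\ w(x)-C_{1}\,d(x,y)\ \ge\ w(x)-C_{1}\qquad\text{for all }y\in B(x,r).
\end{equation*}
If $w(x)\ge0$ we are done, so write $w(x)=-M$ with $M>0$; we may assume $M\ge C_{1}+1$, since otherwise $w(x)\ge-(C_{1}+1)$ already gives (\ref{3.15*}). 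Under this assumption the displayed estimate forces $w\le-M+C_{1}<0$ on the whole of $B(x,r)$, hence, because $a\ge A_{1}>0$, the pointwise inequality $aw\le A_{1}w<0$ there. Securing this sign is the heart of the argument.

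Next I would choose $\eta\in C^{\infty}_{0}(B_{r})$ with $\eta\equiv1$ on $B_{r/2}$, $0\le\eta\le1$ and $|\nabla\eta|\le10/r$, and test the weak form of $\Delta w+|\nabla w|^{2}+aw+b=0$ with $\eta^{2}$; after an integration by parts this reads
\begin{equation*}
\int\eta^{2}aw\ =\ 2\int\eta\,\langle\nabla w,\nabla\eta\rangle-\int\eta^{2}|\nabla w|^{2}-\int\eta^{2}b .
\end{equation*}
Using $|\nabla w|\le C_{1}$, $|b|\le B_{1}$ and $r\le1$, the right-hand side is bounded below by $-\widetilde C r^{-1}|B_{r}|$ for some $\widetilde C=\widetilde C(C_{1},B_{1})$, while the sign information $aw\le A_{1}w\le A_{1}(-M+C_{1})<0$ on $B_{r}$ forces the left-hand side to be at most $A_{1}(-M+C_{1})\,|B_{r/2}|$. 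Combining these two bounds with the Bishop--Gromov estimate $|B_{r}|\le D_{0}(n,K)\,|B_{r/2}|$ (valid because $Ric\ge-(n-1)Kg$ and $r\le1$) gives $A_{1}(M-C_{1})\le\widetilde C D_{0}r^{-1}$, that is, $M\le C_{1}+\widetilde C D_{0}(A_{1}r)^{-1}$. Therefore $w(x)=-M\ge-C$, i.e.\ $u(x)=e^{w(x)}\ge e^{-C}$, for a constant $C$ of the asserted form, which is (\ref{3.15*}). The remaining assertions follow immediately: if (\ref{3.15}) held together with $u\to0$, some point $x$ would satisfy $u(x)<e^{-C}$, contradicting (\ref{3.15*}) applied on $B(x,1)$; and since the hypotheses of Corollary \ref{c3.3} force $u\to0$, they cannot coexist with (\ref{3.15}).

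I do not expect a real obstacle here; the proof is short and robust. The one point that needs care is that $u$ carries no boundary data on $B(x,r)$, so $w$ cannot be controlled from below on the spheres $\partial B_{s}$ and a direct maximum-principle comparison is unavailable; this is exactly why the contradiction is arranged as ``$w(x)$ very negative $\Rightarrow$ $w$ uniformly very negative on $B(x,r)$ $\Rightarrow$ the term $\int\eta^{2}aw$ is strongly negative, which clashes with the $r^{-1}|B_{r}|$ upper bound coming from the cut-off''. (Equivalently, one may replace the integration step by comparing $w$ with a quadratic barrier built from $d(x,\cdot)^{2}$ via the Laplacian comparison theorem.) One should also record that the integration by parts above is legitimate for weak solutions, since (\ref{3.15}) places $w$ in $W^{1,\infty}_{\mathrm{loc}}$.
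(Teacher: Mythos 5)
Your proof is correct, and it follows the same overall strategy as the paper's: integrate the equation $\Delta w+|\nabla w|^{2}+aw+b=0$ for $w=\ln u$ over the ball, control the integration-by-parts term via $|\nabla w|\le C_{1}$, and exploit $a\ge A_{1}>0$ to turn the resulting integral bound on $aw$ into a pointwise lower bound for $w(x)$. The technical execution differs a bit: the paper integrates directly over $B(x,r)$ (so the by-parts term is the boundary integral $\int_{\partial B_r}\langle\mathbf n,\nabla w\rangle$, controlled via $|\partial B_r|\lesssim|B_r|$), then passes from $\fint w\ge -C$ to $u(x)\ge e^{-C}$ via Jensen's inequality and the Harnack-type bound $u(y)\le e^{C_1}u(x)$; you instead test with a cutoff $\eta^{2}$ and deduce a bound on $w(x)$ directly, using the Lipschitz estimate $w(y)\le w(x)+C_1$ to secure the sign of $aw$ on all of $B_r$. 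Your variant is slightly more transparent, since the cutoff makes the $r^{-1}$ factor (and hence the $r$-dependence of the final constant, which is unavoidable as $r\to0$) explicit, and the sign-control step cleanly replaces the paper's somewhat terse passage from the integral inequality on $aw$ to $\fint w\ge -C$. Both arguments are sound and of comparable length.
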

	\begin{proof}
		Let $w=\ln u$. Then by Green's formula,
		\begin{align}\label{3.16}
			\int_{B(x,r)}\Delta w=\int_{\partial B(x,r)}\left\langle \textbf{n}, \nabla w\right\rangle,
		\end{align}
		where $\textbf{n}$ is the outward normal vector of $\partial B(x,r)$. By volume comparison theorem and co-area formula, as well as mean value theorem for integral, for some $C_2=C_2(n,K)>0$, we have $\left|\partial B(x,r)\right|\leq C_2\left| B(x,r)\right|$. Since $w$ satisfies equation (\ref{2.2}) and $\left|\nabla w\right|\leq C_1$, therefore by Jensen's inequality,
		\begin{equation}\label{3.17*}
			\ln\left(\fint_{B(x,r)}u\right)\geq\fint_{B(x,r)}w\geq-C.
		\end{equation}
		Utilizing $\left|\nabla w\right|\leq C_1$ again, it's easy to know the following Harnack inequality for $y\in B(x,r)$:
		\begin{equation}\label{3.18*}
			\frac{u(y)}{u(x)}\leq e^{C_1}.
		\end{equation}
		Then combining (\ref{3.17*}) and (\ref{3.18*}) gives (\ref{3.15*}).
	\end{proof}
	
	\section{A Liouville type theorem}\label{sec4}
	In this section, we focus on equation (\ref{1.1}) with constant coefficients $a(x)>0$ and $ b(x)$. In this situation, (\ref{1.1}) becomes
	\begin{equation}\label{4.1}
		\Delta u(x)+au(x)\ln u(x)+bu(x)=0,
	\end{equation}
	and has a constant solution $u(x)\equiv e^{-\frac{b}{a}}$. First we observe that if the upper bound of $u$ is small enough, then $u\equiv0$.
	\begin{proposition}
		Let $u>0$ be a positive solution to the equation (\ref{4.1}) on non-compact $(M^n,g)$ with $Ric\geq0$. Then there is no $u$ such that $u\leq e^{-\frac{b}{a}-1}$.
	\end{proposition}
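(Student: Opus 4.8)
The plan is to argue by contradiction combined with the maximum-principle trick already used in the proof of Corollary \ref{c3.3}. Suppose such a solution $u$ exists with $u \le e^{-\frac{b}{a}-1}$ on the complete non-compact manifold $(M^n,g)$ with $Ric \ge 0$. Set $w = \ln u$, so that $w \le -\frac{b}{a}-1$ everywhere, and $w$ satisfies the equation $\Delta w + |\nabla w|^2 + aw + b = 0$ as in \eqref{2.2}. The key observation is that the hypothesis $w \le -\frac{b}{a}-1$ forces $aw + b \le -a < 0$ pointwise, hence $\Delta w = -|\nabla w|^2 - (aw+b) \ge a > 0$; thus $w$ is a strictly subharmonic function that is bounded above on a manifold with nonnegative Ricci curvature.

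The next step is to invoke the classical fact that a complete Riemannian manifold with $Ric \ge 0$ is parabolic enough to rule this out — more precisely, there is no subharmonic function bounded above on such a manifold unless it is constant (this follows from the Yau–Cheng gradient estimate, or from the volume-growth/Liouville theorems for subharmonic functions; alternatively one can run a direct integral argument). If $w$ is constant, then $\Delta w = 0$ and $|\nabla w|^2 = 0$, so the equation \eqref{2.2} reduces to $aw + b = 0$, i.e. $w = -\frac{b}{a}$, contradicting $w \le -\frac{b}{a}-1$. If instead one prefers to avoid quoting the Liouville theorem for subharmonic functions directly, one can localize: for a cutoff $\eta$ supported in $B_{2R}$ with $\eta \equiv 1$ on $B_R$ and $|\nabla \eta| \le C/R$, integrate $\Delta w$ against $\eta^2$ using Green's formula to get
\begin{equation*}
a\,|B_R| \le \int \eta^2 \Delta w = -2\int \eta \langle \nabla \eta, \nabla w\rangle \le \frac{1}{2}\int \eta^2 |\nabla w|^2 + 2\int |\nabla \eta|^2,
\end{equation*}
and then absorb $\frac12\int\eta^2|\nabla w|^2$ by noting $\int \eta^2 |\nabla w|^2 = -\int \eta^2(\Delta w + aw + b) \le -\int\eta^2 \Delta w + (\text{bounded})$, using again $aw+b \le -a < 0$; combined with Bishop–Gromov volume growth $|B_{2R}| \le 2^n |B_R|$ this yields $a\,|B_R| \le C |B_{2R}|/R^2 \le C'|B_R|/R^2$, which is false for $R$ large. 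Either route gives the contradiction.

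The main obstacle is choosing the cleanest packaging of the "no bounded-above subharmonic function" principle: the slick version cites a known Liouville theorem, while the self-contained version requires the mild care of absorbing the $|\nabla w|^2$ term (which is exactly where the strict inequality $u \le e^{-\frac{b}{a}-1}$, rather than $u < e^{-\frac{b}{a}}$, is used — it provides the uniform gap $aw + b \le -a$ needed to dominate the volume term). I would present the localized integral argument since it keeps the paper self-contained and parallels the style of the earlier proofs, and remark that it also explains why the borderline value $e^{-\frac{b}{a}}$ itself is not excluded.
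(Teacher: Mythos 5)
There are three genuine gaps in your proposal, and none of the two routes you sketch actually closes.

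First, the claimed inequality $\Delta w \geq a > 0$ for $w=\ln u$ is wrong. From \eqref{2.2} you have $\Delta w = -|\nabla w|^2 - (aw+b)$, and while $-(aw+b)\geq a>0$, the term $-|\nabla w|^2$ is $\leq 0$, so you cannot drop it in the direction you need: $\Delta w$ may well be negative where $|\nabla w|$ is large. What is subharmonic here is $u$ itself, since $\Delta u = -u(a\ln u+b) \geq au > 0$; the two are related by $\Delta u = u(\Delta w + |\nabla w|^2)$, and the $|\nabla w|^2$ term is exactly what the exponential change of variables hides from you.

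Second, even if you had a bounded-above subharmonic function in hand, the ``slick'' Liouville theorem you invoke is false: a complete manifold with $Ric\geq 0$ need not be parabolic, and on $\mathbb{R}^n$ with $n\geq 3$ the function $v(x) = -(1+|x|^2)^{(2-n)/2}$ is smooth, bounded, non-constant, and subharmonic. One-sided boundedness of a subharmonic function forces constancy only on parabolic manifolds (e.g.\ when $n\leq 2$), or under extra integrability/decay hypotheses; it is not a consequence of $Ric\geq 0$ alone, and neither the Cheng--Yau gradient estimate (which concerns harmonic or eigenfunctions) nor the $L^p$-type Liouville theorems give it.

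Third, the self-contained integral argument has a sign error in the absorption step. You write
$\int \eta^2|\nabla w|^2 = -\int \eta^2\Delta w - \int \eta^2(aw+b)$
and then claim $-\int \eta^2(aw+b)$ is bounded ``using $aw+b\leq -a<0$.'' But that inequality gives the \emph{lower} bound $-\int\eta^2(aw+b)\geq a\int\eta^2$; for an \emph{upper} bound you would need $w$ bounded below, which is not assumed (recall $u>0$ does not prevent $u\to 0$, i.e.\ $w\to-\infty$). So the Caccioppoli estimate does not close.

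The paper's proof avoids all of this by working directly with $u$. It observes that $u$ is subharmonic, and — using the Bochner formula, $Ric\geq 0$, Kato's inequality, and the sign of $a\ln u + a + b\leq 0$ — that $|\nabla u|$ is subharmonic as well. It then applies the Li--Schoen mean value inequality to $|\nabla u|^2$ together with the Caccioppoli-type integral estimate for the nonnegative subharmonic function $u$ to obtain $|\nabla u|^2(x)\leq C r^{-2}\fint_{B(x,r)}u^2 \leq C'r^{-2}$. Letting $r\to\infty$ forces $\nabla u\equiv 0$, and a constant positive solution of \eqref{4.1} must equal $e^{-b/a}$, contradicting $u\leq e^{-b/a-1}$. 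If you want a route in the spirit of your proposal that actually works, you would need this two-step quantitative decay (subharmonicity of $|\nabla u|$ plus Li--Schoen) rather than a bare Liouville theorem for subharmonic functions.
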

	\begin{proof}
		If $u\leq e^{-\frac{b}{a}-1}$, then by (\ref{4.1}), $\Delta u\geq0$. On the other hand, we compute
		\begin{align*}
			\left|\nabla u\right|\Delta\left|\nabla u\right|+\left\langle \nabla\left|\nabla u\right|, \nabla\left|\nabla u\right| \right\rangle=\frac{1}{2}\Delta\left|\nabla u\right|^2&=\left|D^2u\right|^2+\left\langle \nabla u, \nabla\Delta u\right\rangle+Ric\left(\nabla u,\nabla u\right)\\
			&\geq \left|D^2u\right|^2-\left\langle \nabla u, \nabla\left(au\ln u+bu\right)\right\rangle\\
			&\geq \left|D^2u\right|^2-\left(a+a\ln u+b\right)\left|\nabla u\right|^2\\
			&\geq \left|D^2u\right|^2,
		\end{align*}
		hence $\Delta\left|\nabla u\right|\geq0$. Then by the mean value inequality for subharmonic functions due to  Li-Schoen \cite{LS}(see also \cite[chapter 2,Theorem 6.1]{SY}), there exists a uniform constant $C_1>0$ such that for any $B(x,r)$,
		$$\left|\nabla u\right|^2(x)\leq C_1\fint_{B(x,r)}\left|\nabla u\right|^2.$$
		On the other hand, by the integral estimate of non-negative sub-harmonic functions \cite[chapter 2, Lemma 6.3]{SY}, there exists a uniform constant $C_2>0$ such that
		$$\left|\nabla u\right|^2(x)\leq\frac{C_2}{r^2}\fint_{B(x,r)}u^2.$$
		In light of Theorem \ref{t1.1}, we know $u\leq D$ where $D$ is a uniform upper bound of $u$, hence by the above arguments, there exists a uniform constant $C>0$ such that
		$$\left|\nabla u\right|^2(x)\leq\frac{C}{r^2}.$$
		Then setting $r\longrightarrow +\infty$ implies $u$ must be constant and contradicts $0<u\leq e^{-\frac{b}{a}-1}<e^{-\frac{b}{a}}$.
	\end{proof}
	
	The above result implies that around $u\equiv0$, there is no positive solution to equation (\ref{4.1}). In fact, this type of gap phenomenon also occurs to the constant solution $u\equiv e^{-\frac{b}{a}}$.
	\begin{theorem}[=Theorem \ref{t1.3}]\label{t4.1}
	For constant coefficients $a>0$ and $b$, there exists a constant $\varepsilon=\varepsilon\left(n,a,b\right)>0$ such that if
	\begin{equation}\label{4.2*}
		0<e^{-\frac{b}{a}}-\varepsilon\leq u\leq e^{-\frac{b}{a}}\quad\mbox{or}\quad e^{-\frac{b}{a}}\leq u\leq e^{-\frac{b}{a}}+\varepsilon
	\end{equation}
	is a solution to (\ref{1.1}) on $B\left(x,\varepsilon^{-1}\right)$ with $Ric\geq 0$, then $u\equiv e^{-\frac{b}{a}}$. Especially, in these situations, except for $u\equiv e^{-\frac{b}{a}}$, there is no nonconstant solution $u$, which satisfies the above pinching condition (\ref{4.2*}), such that $u\rightarrow e^{-\frac{b}{a}}$ at infinity on a complete non-compact Riemannian manifold with $Ric\geq 0$.
\end{theorem}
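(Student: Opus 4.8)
The plan is to turn the pinching hypothesis into the statement that a non‑negative function $h$ built from $u$ is a supersolution of $\Delta+\tfrac a2$ on the large ball $B(x,\varepsilon^{-1})$, and then to exclude this by a Caccioppoli‑type integral estimate together with Bishop--Gromov volume comparison --- i.e.\ the same ingredients used in Section \ref{sec2}. Throughout write $c:=e^{-b/a}$; since $a$ is a positive constant, $b$ a constant, and $u$ is bounded by the pinching, Corollary \ref{c3.1} gives $u\in C^\infty$, so all computations are legitimate.

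First I would set $h:=u-c$ in the case $e^{-b/a}\le u\le e^{-b/a}+\varepsilon$, and $h:=c-u$ in the case $0<e^{-b/a}-\varepsilon\le u\le e^{-b/a}$; in both cases $0\le h\le\varepsilon$. Using $a\log c+b=0$ and equation \eqref{4.1}, one computes $\Delta h=-au\log(u/c)$ in the first case and $\Delta h=au\log(u/c)$ in the second, and in either case the sign of $\log(u/c)$ makes $h$ superharmonic. More quantitatively, from the elementary inequalities $\log(1+t)\ge t/2$ for $t\in[0,1]$ and $\log(1-t)\le -t$ for $t\in[0,1)$, combined with the smallness constraint $\varepsilon\le\tfrac12 e^{-b/a}$, one gets on $B(x,\varepsilon^{-1})$ the differential inequality
\[
\Delta h+\tfrac a2\,h\le 0,\qquad h\ge 0 .
\]
If $h$ vanishes at an interior point, the strong maximum principle forces $h\equiv0$, i.e.\ $u\equiv e^{-b/a}$, and we are done; so from now on assume $h>0$ on $B(x,\varepsilon^{-1})$.

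Next, set $w:=\log h$, so that $\Delta w+|\nabla w|^2+\tfrac a2\le 0$. Take a cut‑off $\varphi\in C_0^\infty\big(B(x,\varepsilon^{-1})\big)$ with $\varphi\equiv1$ on $B(x,\tfrac12\varepsilon^{-1})$ and $|\nabla\varphi|\le C_0\varepsilon$; multiplying by $\varphi^2$, integrating by parts, and absorbing the cross term $-2\int\varphi\langle\nabla\varphi,\nabla w\rangle$ via Cauchy--Schwarz yields
\[
\tfrac a2\int\varphi^2\ \le\ 2\int|\nabla\varphi|^2\ \le\ C\varepsilon^2\,\big|B(x,\varepsilon^{-1})\big| .
\]
Since $Ric\ge0$, Bishop--Gromov gives $\big|B(x,\varepsilon^{-1})\big|\le 2^n\big|B(x,\tfrac12\varepsilon^{-1})\big|\le 2^n\int\varphi^2$, hence $\tfrac a2\le C\,2^n\varepsilon^2$. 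Choosing $\varepsilon=\varepsilon(n,a,b)$ small enough that $C\,2^n\varepsilon^2<\tfrac a2$ while also keeping $\varepsilon\le\tfrac12 e^{-b/a}$ produces a contradiction, so $h\equiv0$ and $u\equiv e^{-b/a}$. The final assertion is then immediate: on a complete non‑compact manifold with $Ric\ge0$ there are geodesic balls of radius $\varepsilon^{-1}$ around every point, and applying the first part on each of them forces $u\equiv e^{-b/a}$ everywhere, so no nonconstant pinched solution (with $u\to e^{-b/a}$ at infinity or otherwise) can exist.

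I expect the only genuinely delicate point to be the derivation of the clean inequality $\Delta h+\tfrac a2 h\le0$: this is where the structure of \eqref{4.1} and the precise smallness of the pinching width relative to $e^{-b/a}$ are used, and one must keep track of the two sign cases and apply the logarithm estimates carefully. The cut‑off/volume‑comparison step is routine, and the usual technical nuisance concerning the smoothness of the distance function on the cut locus is dealt with exactly as in the references invoked in Section \ref{sec2}.
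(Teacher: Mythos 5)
Your proof is correct and takes a genuinely simpler route than the paper's. Both arguments start from the same key observation: with $c=e^{-b/a}$, the function $h=\pm(u-c)$ is a non-negative supersolution of a Schr\"odinger operator with a uniformly positive potential once $\varepsilon$ is small. The paper keeps the exact potential $V=\frac{u(a\ln u+b)}{u-c}$, writes $\Delta w+Vw=0$ with $w=u-c$, sets $v=\ln w$ and $G=|\nabla v|^2+V$, and then runs a full Nash--Moser iteration (needing estimates on $\Delta V$, a Bochner inequality for $G$, and the Saloff-Coste Sobolev inequality) to obtain $\sup G\le C\,r^{-2}$ and contradict $G\ge V\ge\tfrac{2a}{3}$. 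You instead replace $V$ by the clean lower bound $\tfrac{a}{2}$ from the start, which reduces the differential inequality to $\Delta w+|\nabla w|^2+\tfrac a2\le 0$ with $w=\log h$; the contradiction then falls out of a single Caccioppoli-type integration by parts, $\tfrac a2\int\varphi^2\le\int|\nabla\varphi|^2$, together with Bishop--Gromov doubling. This avoids the computation of $\Delta V$, the Bochner step, and the Moser iteration entirely. What the paper's heavier machinery buys is a pointwise bound on $G$ (hence quantitative control of $|\nabla\log(u-c)|$), but for the Liouville conclusion only the integral estimate is needed, so your shortcut is legitimate. Two small points worth keeping explicit in a write-up: the strong maximum principle step is the minimum principle for the superharmonic $h\ge0$ (equivalently for $\Delta+\tfrac a2$ acting on non-negative functions), which handles any interior zero of $h$; and the derivation of $\Delta h+\tfrac a2 h\le0$ uses $\log(1+t)\ge t/2$ on $[0,1]$ and $-(1-s)\log(1-s)\ge s/2$ on $[0,\tfrac12]$, both of which indeed hold once $\varepsilon\le\tfrac12 e^{-b/a}$ -- your reduction is sound.
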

	\begin{proof}
		Since the proofs of two cases are almost the same, here we only need to give the proof of the later case, i.e. $e^{-\frac{b}{a}}\leq u\leq e^{-\frac{b}{a}}+\varepsilon$.
		For $u> e^{-\frac{b}{a}}$, we rewrite equation (\ref{4.1}) as
		\begin{equation}\label{4.2}
			\Delta\left(u-e^{-\frac{b}{a}}\right)+V\left(u-e^{-\frac{b}{a}}\right)=0,
		\end{equation}
		where
		$$V=\frac{u(a\ ln u+b)}{\left(u-e^{-\frac{b}{a}}\right)}.$$
		Moreover, it's easy to check that
		$\lim\limits_{u\longrightarrow e^{-\frac{b}{a}}}V(u)=a>0$ and $\lim\limits_{u\longrightarrow e^{-\frac{b}{a}}}V'(u)=\frac{a}{2e^{-\frac{b}{a}}}>0$, hence if we set $w=u-e^{-\frac{b}{a}}\geq 0$, then by (\ref{4.1}) and (\ref{4.2}), the following Schr\"{o}dinger equation is well defined for all $w\geq0$:
		\begin{equation}\label{4.3}
			\Delta w+Vw=0.
		\end{equation}
		Also, we may choose a small $\varepsilon=\varepsilon(a,b)>0$ such that $V\geq \frac{2a}{3}$.
		As a consequence of the strong maximum principle, either $w\equiv0$ or $w>0$ in $B\left(x,\varepsilon^{-1}\right)$, where $\varepsilon$ is determined later, therefore we only need to consider the case of $w>0$.

		Similarly, if $u\leq e^{-\frac{b}{a}}$, then we set $w=e^{-\frac{b}{a}}-u\geq 0$ and consider
		\begin{equation*}
			\Delta w+Uw=0,
		\end{equation*}
		where
		$$U=\frac{-u(a\ ln u+b)}{\left(e^{-\frac{b}{a}}-u\right)}.$$
		
		As before, let $v=\ln w=\ln\left(u-e^{-\frac{b}{a}}\right)$ and $G=\left| \nabla v\right|^2+V$, then by (\ref{4.3}),
		\begin{align}\label{4.4}
			\Delta v+G=0.
		\end{align}
		First we compute
		\begin{align*}
			\Delta V&=\Delta\left(\frac{u(a\ ln u+b)}{\left(u-e^{-\frac{b}{a}}\right)}\right)\\
			&=\frac{\Delta \left( u(a\ ln u+b)\right)}{u-e^{-\frac{b}{a}}}+ u(a\ ln u+b)\Delta \left(u-e^{-\frac{b}{a}}\right)^{-1}+2\left\langle\nabla \left( u(a\ ln u+b)\right), \nabla \left(u-e^{-\frac{b}{a}}\right)^{-1} \right\rangle\\
			&=\left\lbrace\frac{a(1+\ln u)+b}{u-e^{-\frac{b}{a}}}-\frac{u(a\ln u+b)}{\left(u-e^{-\frac{b}{a}} \right)^2}\right\rbrace\Delta u\\
			&\quad+\left\lbrace\frac{a}{u\left( u-e^{-\frac{b}{a}}\right)}+\frac{2u(a\ln u+b)}{\left(u-e^{-\frac{b}{a}}\right)^3}-\frac{2a(1+\ln u)+2b}{\left(u-e^{-\frac{b}{a}} \right)^2}\right\rbrace\left| \nabla u\right|^2\\
			&=: \uppercase\expandafter{\romannumeral1}+\uppercase\expandafter{\romannumeral2}.
		\end{align*}
		Recall $\Delta u=-au\ln u-bu$, then we can rewrite  $\uppercase\expandafter{\romannumeral1}=V^2-(a+a\ln u+b)V$. Notice that
		$$\lim\limits_{u\longrightarrow e^{-\frac{b}{a}}}\frac{\uppercase\expandafter{\romannumeral1}}{V}=\lim\limits_{u\longrightarrow e^{-\frac{b}{a}}}\left(V-(a+a\ln u+b)\right)=0,$$
		hence it's easy to know, for all $\varepsilon=\varepsilon(a,b)>0$ small enough, if $u\leq e^{-\frac{b}{a}}+\varepsilon$, then
		\begin{equation}\label{4.5}
			\frac{\uppercase\expandafter{\romannumeral1}}{V}\geq -C(\varepsilon,a,b),
		\end{equation}
		and moreover, $C\longrightarrow0$ as $\varepsilon\longrightarrow0$.
		
		As for $\uppercase\expandafter{\romannumeral2}$, we consider $$\overline{\uppercase\expandafter{\romannumeral2}}=\frac{a}{u}+\frac{2u(a\ln u+b)}{\left(u-e^{-\frac{b}{a}}\right)^2}-\frac{2a(1+\ln u)+2b}{\left(u-e^{-\frac{b}{a}} \right)}.$$
		Also, it's easy to check
		$\lim\limits_{u\longrightarrow e^{-\frac{b}{a}}}\overline{\uppercase\expandafter{\romannumeral2}}=0$. Hence for $\left| \nabla u\right|>0$ and all $\varepsilon=\varepsilon(a,b)>0$ small enough, if $u\leq e^{-\frac{b}{a}}+\varepsilon$, then
		\begin{align}\label{4.6}
			\frac{\uppercase\expandafter{\romannumeral2}}{\left| \nabla v\right|^2}=\overline{\uppercase\expandafter{\romannumeral2}}\left(u-e^{-\frac{b}{a}} \right)\geq-C(\varepsilon,a,b),
		\end{align}
		and also, $C\longrightarrow0$ as $\varepsilon\longrightarrow0$.
		
		Now by (\ref{4.5}), (\ref{4.6}) and recall $G=\left| \nabla v\right|^2+V$, we have
		\begin{align}\label{4.7}
			\Delta V\geq-C(\varepsilon,a,b)G.
		\end{align}
		Next, since $Ric\geq0$, then by (\ref{4.4}), (\ref{4.7}), we estimate
		\begin{align}\label{4.8}
			\Delta G&=\Delta\left(\left| \nabla v\right|^2+V\right)\nonumber\\
			&=\Delta \left| \nabla v\right|^2+\Delta V\nonumber\\
			&\geq 2\left|D^2 v\right|^2+2\left\langle\nabla v,\nabla\Delta v\right\rangle-C(\varepsilon,a,b)G\nonumber\\
			&\geq \frac{2G^2}{n}-2\left\langle\nabla v,\nabla G\right\rangle-C(\varepsilon,a,b)G\nonumber\\
			&\geq \frac{G^2}{n}-2\left\langle\nabla v,\nabla G\right\rangle,
		\end{align}
		here we choose $\varepsilon=\varepsilon(a,b,n)$ sufficiently small such that $G\geq \frac{2a}{3}
		\geq C(\varepsilon,a,b)n$.
		Next, for $r>0$, $l\geq0$ and $\eta\in C^\infty_0(B_{r})$, we multiply by $\eta^2G^l$ on both sides of (\ref{4.8}) and integrate on $B_{r}$. Since $V>0$, then by the arguments as to obtain (\ref{2.12}),
		\begin{align*}
			&\int\left| \nabla\left( \eta G^{\frac{l+1}{2}}\right) \right|^2\nonumber\\
			&\leq\frac{\left( l+1\right)^2 }{2(l-1)}\int \left(\frac{4}{l+1}-\frac{1}{n}\right)\eta^2G^{l+2}+\frac{l\left(2l+3\right)+(l+1)^2}{l(l-1)}\int \left| \nabla\eta\right|^2G^{l+1}.
		\end{align*}
		Therefore by the Sobolev inequality (\ref{2.1}) with $\varphi=\eta G^{\frac{l+1}{2}}$,
		\begin{align}\label{4.9}
			&\left( \fint_{B_r}\left(\eta^2 G^{l+1}\right)^{\frac{n}{n-2}}\right)^{\frac{n-2}{n}}\nonumber\\
			&\leq C_Sr^2l\fint_{B_r}\left(\frac{4}{l+1}-\frac{1}{n}\right)\eta^2G^{l+2}+C_S\fint_{B_r}\eta^2 G^{l+1}+C_Sr^2l\fint_{B_r}\left| \nabla\eta\right|^2G^{l+1}.
		\end{align}
		Let $r\geq 1$ and $l\geq 8n+\frac{2n}{a}$, then (\ref{4.9}) gives
		\begin{align}\label{4.10}
			&\left( \fint_{B_r}\left(\eta^2 G^{l+1}\right)^{\frac{n}{n-2}}\right)^{\frac{n-2}{n}}\nonumber\\
			&\leq C_Sr^2l\fint_{B_r}\left| \nabla\eta\right|^2G^{l+1}.
		\end{align}
		Consequently, by standard iteration as to get (\ref{2.17}), there exists a constant $C_1=C_1(n,a)>0$ independent of $r$ such that
		\begin{equation}\label{4.11}
			\sup\limits_{B_\frac{r}{2}}G\leq C_1\left\|G\right\|^*_{1, B_\frac{4r}{5}}.
		\end{equation}
		Next, we choose some $\eta\in C^\infty_0(B_r)$ such that
		$$\eta\equiv1\quad\mbox{on}\hspace*{0.3em}B_\frac{4r}{5}\quad\mbox{and}\quad \left| \nabla\eta\right|\leq\frac{10}{r}.$$
		For $\eta^2G$, since $V>0$, there holds
		\begin{align*}
			\int\eta^2G=-\int\eta^2\Delta v&=2\int\left\langle \nabla\eta, \eta\nabla v\right\rangle\nonumber\\
			&\leq2\int \left| \nabla\eta\right|^2+\frac{1}{2}\int\eta^2\left| \nabla v\right|^2\nonumber\\
			&=2\int\left|\nabla\eta\right|^2+\frac{1}{2}\int\eta^2G-\frac{1}{2}\int\eta^2V\\
			&\leq 2\int \left| \nabla\eta\right|^2+\frac{1}{2}\int\eta^2G.
		\end{align*}
		Therefore,
		\begin{align}\label{4.12}
			\int\eta^2G\leq4\int \left| \nabla\eta\right|^2.
		\end{align}
		As a consequence of (\ref{4.12}),
		\begin{align}\label{4.13}
			0<\left\|G\right\|^*_{1, B_\frac{4r}{5}}\leq\frac{\int\eta^2G}{\left| B_\frac{4r}{5}\right|}\leq\frac{400\left| B_r\right|}{\left| B_\frac{4r}{5}\right|r^2}.
		\end{align}
		Finally, combining the volume comparison theorem and (\ref{4.13}), if we set $r= r(n,a)$ large enough, then $G\leq \frac{a}{2}$, but this contradicts the fact that $G\geq\frac{2a}{3}>0$ at the beginning. Hence we complete the proof.
	\end{proof}
	
	\noindent {\it\textbf{Acknowledgements}}: The author Y. Wang is supported partially by NSFC (Grant No.11971400) and National key Research and Development projects of China (Grant No. 2020YFA0712500).
	
	\bibliographystyle{amsalpha}

\begin{thebibliography}{2}
		\bibitem{C-C*} L. Chen and W.-Y. Chen, \emph{Gradient estimates for a nonlinear parabolic equation on complete non-compact Riemannian manifolds}, Ann. Global Anal. Geom. {\bf 35} (2009), no. 4, 397-404.	
		
		\bibitem{CL} X. Cao, B. Fayyazuddin Ljungberg and B. Liu, \emph{Differential Harnack estimates for a nonlinear heat equation}, J. Funct. Anal. {\bf 265}(2013), no.10, 2312-2330.	
		
		\bibitem{DWZ} X. Dai, G. Wei and Z. Zhang, \emph{Local Sobolev constant estimate for integral Ricci curvature bounds}, Adv. in Math. {\bf 325}(2018), 1-33.
		
		\bibitem{GT}  D. Gilbarg and N. S. Trudinger, \emph{Elliptic partial differential equations of second order}, Springer, Berlin, 1998.
		
		\bibitem{GSS} A. Grigor'yan, S. Ishiwata and L. Saloff-Coste, \emph{Poincar\'e constant on manifolds with ends}, Proc. Lond. Math. Soc. {\bf 126}(2023), no. 6, 1961-2012.
		
		\bibitem{HHW}
		D. Han, J. He and Y.-D. Wang, \emph{Gradient estimates for $\Delta_pu-|\nabla u|^q+b(x)|u|^{r-1}u=0$ on a complete Riemannian manifold and Liouville type theorems}, arXiv:2309.03510.
		
		\bibitem{HWW}
		J. He, Y.-D. Wang and G.-D. Wei, \emph{Gradient estimates for solutions of the equation $\Delta_pu + a u^q=0$ on a complete Riemannian manifold}, arXiv:2304.08238.
		
		\bibitem{Li} P. Li, \emph{Geometric analysis}, Cambridge Univ. Press, Cambridge, 2012.
		
		\bibitem{LS} P. Li and R. Schoen, \emph{$L^p$ and mean value properties of subharmonic functions on Riemannian manifolds}, Acta Math. {\bf 153}(1984), no.3-4, 279-301.
		
		\bibitem{MW} Y. Ma and B. Wang, \emph{Ricci curvature integrals, local functionals, and the Ricci flow}, arXiv:2109.02449, to appear in Trans. Amer. Math. Soc.
		
		\bibitem{P} Grisha Perelman, \emph{The entropy formula for the Ricci flow and its geometric applications}, arXiv:math.DG/0211159.
		
		\bibitem{Q1} B. Qian, \emph{Hamilton-type Gradient Estimates for a Nonlinear Parabolic Equation on Riemannian Manifolds}, Acta Mathematica Sinica, English Series, {\bf 27}(2011), 1071–1078.
		
		\bibitem{Q2} B. Qian, \emph{Yau’s gradient estimates for a nonlinear elliptic equation},
		Arch. Math. {\bf 108}(2017), 427–435.
		
		\bibitem{Ro} O. S. Rothaus, \emph{Logarithmic Sobolev inequalities and the spectrum of Schr\"odinger operators}, J. Funct. Anal. {\bf 42(1)}(1981), 110-120.
		
		\bibitem{RV} M. Rimoldi and G. Veronelli, \emph{Extremals of Log Sobolev inequality on non-compact manifolds and Ricci soliton structures}, Calc. Var. Partial Differential Equations {\bf 58} (2019), no. 2, Paper No. 66, 26 pp.
		
		\bibitem{SC} L. Saloff-Coste, \emph{Uniformly elliptic operators on Riemannian manifolds}, J. Differ. Geom. {\bf 36}(1992), no.2,  417-450.
		
		\bibitem{SY} R. Schoen and S.T. Yau, \emph{Lectures on differential geometry}, International Press, Boston, 2012.
		
		\bibitem{Wj} J. Wang, \emph{Gradient estimates for a class of nonlinear elliptic and parabolic equations on Riemannian manifolds}, arXiv:2010.08059, to appear in Frontiers of Mathematics.
		
		\bibitem{WW} J. Wang and Y. Wang, \emph{Gradient Estimates For $\Delta u + a(x)u\log u + b(x)u = 0$ and its Parabolic Counterpart Under Integral Ricci Curvature Bounds}, arXiv:2109.05235, to appear in Comm. Anal. Geom.
		
		\bibitem{WZ}
		X.-D. Wang and L. Zhang, \emph{Local gradient estimate for {$p$}-harmonic functions on {R}iemannian manifolds}, Comm. Anal. Geom., {\bf 19(4)} (2011), 759--771.
		
		\bibitem{WWei}
		Y.-D. Wang and G.-D. Wei, \emph{On the nonexistence of positive solution to {$\Delta u + au^{p+1} = 0$} on Riemannian manifolds}, J. Differential Equations, {\bf 362}(2023), 74--87.
		
		\bibitem{Yang} Y.-Y. Yang, \emph{Gradient estimates for a nonlinear parabolic equation on Riemannian manifolds}, Proc. Amer. Math. Soc. {\bf 136} (2008), 4095-4102.
		
		\bibitem{Z*} Q. S. Zhang,  \emph{Extremal of Log Sobolev inequality and $W$ entropy on noncompact manifolds}, J. Funct. Anal. {\bf 263}(2012), 2051-2101.
	\end{thebibliography}

	Jie Wang,  Institute of Geometry and Physics, University of Science and Technology of China, No. 96 Jinzhai Road, Hefei, Anhui Province, 230026, China.
	
	Email: wangjie9math@163.com
	
	Youde Wang, 1. School of Mathematics and Information Sciences, Guangzhou University; 2. Hua Loo-Keng Key Laboratory of Mathematics, Institute of Mathematics, Academy of Mathematics and Systems Science, Chinese Academy of Sciences, Beijing 100190, China; 3. School of Mathematical Sciences, University of Chinese Academy of Sciences, Beijing 100049, China.
	
	Email: wyd@math.ac.cn
	
\end{document}